\numberwithin{equation}{section}
\newtheorem{thm}{Theorem}[section]
\newtheorem{prop}[thm]{Proposition}
\newtheorem{rem}[thm]{Remark}
\newcommand{\propref}[1]{Proposition~\ref{#1}}
\newcommand{\eqnref}[1]{~(\ref{#1})}
\begin{document}


\title{Imaginary crystal bases for $U_q(\widehat{\mathfrak{sl}(2)})$-modules in category  $\mathcal O^q_{\text{red,im}}$}
\author{ Ben Cox}
\author{Vyacheslav Futorny}
\author{Kailash C. Misra}
\keywords{Quantum affine algebras,  Imaginary Verma modules, Kashiwara algebras, imaginary crystal bases}
\address{Department of Mathematics \\
University of Charleston \\
Charleston SC, USA}
\email{coxbl@cofc.edu}
\address{Department of Mathematics\\
 University of S\~ao Paulo\\
 S\~ao Paulo, Brazil}
 \email{futorny@ime.usp.br}
 \address{Department of Mathematics\\
 North Carolina State University\\
 Raleigh, NC, USA}
 \email{misra@ncsu.edu}
 \begin{abstract}  Recently we defined imaginary crystal bases for $U_q(\widehat{\mathfrak{sl}(2)})$- modules in category $\mathcal O^q_{\text{red,im}}$ and showed the existence of such bases for reduced quantized imaginary Verma modules for $U_q(\widehat{\mathfrak{sl}(2)})$. In this paper we show the existence of imaginary crystal basis for any object in the category $\mathcal O^q_{\text{red,im}}$.  
\end{abstract}
\date{}
\thanks{}

\subjclass{Primary 17B37, 17B15; Secondary 17B67, 1769}

\maketitle
\section{Introduction} 

Consider the affine Lie algebra  $\widehat{\mathfrak{g}} = \widehat{\mathfrak{sl}(2)}$  with Cartan subalgebra $\widehat{\mathfrak{h}}$.
Let $\{\alpha_0 , \alpha_1\}$ be the simple roots, $\delta = \alpha_0 + \alpha_1$ the null root and $\Delta$ the set of roots for 
$\widehat{\mathfrak{g}}$ with respect to $\widehat{\mathfrak{h}}$. Then we have a natural (standard) partition of $\Delta = \Delta_+ \cup \Delta_-$ into set of positive and negative roots which is closed (i.e. $\alpha , \beta \in \Delta_+$ and $\alpha + \beta \in \Delta$ implies $\alpha + \beta \in \Delta_+$). Corresponding to this standard partition we have a standard Borel subalgebra from which we induce the standard Verma module. Let $S =  \{ \alpha_1+k\delta \ |\ k\in \mathbb Z \} \cup \{l\delta\ |\ l \in \mathbb Z_{>0} \}$. Then $\Delta = S \cup -S$ is another closed partition of the root system $\Delta$ which is not Weyl group conjugate to the standard partition. The classification of closed partitions of the root system for affine Lie algebras was obtained by Jakobsen and Kac \cite{JK,MR89m:17032}, and independently by Futorny \cite{MR1078876,MR1175820}. 
For the affine Lie algebra 
$\widehat{\mathfrak{g}}$ the partition $\Delta = S \cup -S$ is the only nonstandard closed partition which gives rise to a nonstandard Borel subalgebra. The Verma module $M(\lambda)$ with highest weight $\lambda$ induced by this nonstandard Borel subalgebra is called the imaginary Verma module for $\widehat{\mathfrak{sl}(2)}$. Unlike the standard Verma module, the imaginary Verma module $M(\lambda)$ contains both finite and infinite dimensional weight spaces.
 
For generic $q$, consider the associated quantum affine algebra $U_q(\widehat{\mathfrak{sl}(2)})$  (\cite{MR802128}, \cite{MR797001}). Lusztig \cite{MR954661} proved that the integrable highest weight modules of  $\widehat{\mathfrak{sl}(2)}$ can be deformed to those over $U_q(\widehat{\mathfrak{sl}(2)})$ in such a way that the dimensions of the weight spaces are invariant under the deformation. 
Following the framework of \cite{MR954661} and \cite{MR1341758}, it was shown in  (\cite {MR97k:17014}, \cite{MR1662112}) that the imaginary Verma modules $M(\lambda)$ can also be $q$-deformed to the quantum imaginary Verma modules $M_q(\lambda)$ in such a way that the weight multiplicities, both finite and infinite-dimensional, are preserved.

Lusztig \cite{MR1035415} from a geometric view point and Kashiwara \cite{MR1115118} from an algebraic view point 
introduced the notion of canonical bases (equivalently, global crystal bases) for standard Verma modules
$V_q(\lambda)$ and integrable highest weight modules $L_q(\lambda)$. The crystal basis (\cite{MR1090425}) can be thought of as the $q=0$ limit of the canonical basis. An important ingredient in the construction of crystal basis by Kashiwara in
 \cite{MR1115118}, is a subalgebra $\mathcal {B}_q$ of the quantum group which acts on the negative part of the quantum group  
 by left multiplication. This subalgebra $\mathcal {B}_q$, which is called the Kashiwara algebra, played an important role in the definition of the Kashiwara operators which defines the crystal basis. The algebra  $\mathcal {B}_q$ has been defined in greater generality for integrable cases in \cite{Mas09}. In \cite{CFM10} we constructed an analog of Kashiwara algebra, denoted by $\mathcal K_q$ for the imaginary Verma module $M_q(\lambda)$ for the quantum affine algebra $U_q(\widehat{\mathfrak{sl}(2)})$ by introducing certain Kashiwara-type operators. Then we proved that a certain quotient $\mathcal N_q^-$ of  $U_q(\widehat{\mathfrak{g}})$ is a simple $\mathcal K_q$-module and gave a necessary and sufficient condition for a particular quotient  $\bar{M}_q(\lambda)$ (called reduced imaginary Verma module) of  $M_q(\lambda)$ to be simple. These results were generalized to all untwisted affine Lie algebras in  \cite{CFM14}.

In \cite{CFM17} we considered a category $\mathcal O^q_{\text{red,im}}$ of $U_q(\widehat{\mathfrak{sl}(2)})$-modules and defined a crystal-like basis which we called imaginary crystal basis for modules in this category. We showed that the reduced imaginary Verma modules $\bar{M}_q(\lambda)$ are simple objects in $\mathcal O^q_{\text{red,im}}$ and any module in $\mathcal O^q_{\text{red,im}}$ is a direct sum of reduced imaginary Verma modules for $U_q(\widehat{\mathfrak{sl}(2)})$. Then we proved the existence of imaginary crystal basis for a reduced imaginary Verma module $\bar{M}_q(\lambda)$. In this paper we prove the existence of imaginary crystal basis for any object in the category $\mathcal O^q_{\text{red,im}}$.

 \section{Quantum affine algebra $U_q(\widehat{\mathfrak{sl}(2)})$}

Let $\mathbb{F}$ denote a field of characteristic zero and $q$ be an indeterminant (not a root of unity). The {\it quantum affine algebra} $U_q(\widehat{\mathfrak{sl}(2)})$ is the associative $\mathbb F(q^{1/2})$-algebra with 1 generated by
$$ E_0, E_1, F_0, F_1, K_0^{\pm 1}, K_1^{\pm 1}, D^{\pm 1} $$
with defining relations:
\begin{align*}& DD^{-1}=D^{-1}D=K_iK_i^{-1}=K_i^{-1}K_i=1, \\
& E_iF_j-F_jE_i = \delta_{ij}\frac{K_i-K_i^{-1}}{q-q^{-1}}, \\
& K_iE_iK_i^{-1}=q^2E_i, \ \ K_i F_i K_i^{-1} =q^{-2}F_i, \\
& K_i E_jK_i^{-1} = q^{-2}E_j, \ \
K_i F_jK_i^{-1} = q^2F_j, \quad i\neq j, \\
& K_iK_j-K_jK_i = 0, \ \ K_iD-DK_i=0, \\
& DE_iD^{-1}=q^{\delta_{i,0}} E_i, \ \
DF_iD^{-1}=q^{-\delta_{i,0}} F_i, \\
& E_i^3E_j-[3]E_i^2E_jE_i+[3]E_iE_jE_i^2-E_jE_i^3 =0, \quad i\neq j, \\
& F_i^3F_j-[3]F_i^2F_jF_i+[3]F_iF_jF_i^2-F_jF_i^3 = 0, \quad i\neq j, \\
\end{align*}where, $[n] = \frac{q^n-q^{-n}}{q-q^{-1}}$ and $i, j \in \{0,1\}$.

There is an alternative realization for $U_q(\widehat{\mathfrak{sl}(2)})$,
due to Drinfeld
\cite{MR802128}, which we need.  Let
$U_q$ be the associative algebra with $1$ over $\mathbb F(q^{1/2})$
generated by the
elements $x^{\pm }_k$ ($k\in \mathbb Z$), $h_l$ ($l \in \mathbb Z
\setminus \{0\}$), $K^{\pm 1}$,
$D^{\pm 1}$, and $\gamma^{\pm \frac12}$ with the following defining
relations:
\begin{align}
DD^{-1}=D^{-1}D&=KK^{-1}=K^{-1}K=1,  \\
[\gamma^{\pm \frac 12},u] &= 0 \quad \forall u \in U, \\
[h_k,h_l] &= \delta_{k+l,0} \frac{[2k]}{k} \frac{\gamma^k -
\gamma^{-k}}{q-q^{-1}},  \\
[h_k,K]&=0,\quad [D,K]=0,  \\
Dh_kD^{-1}&=q^k h_k,  \\
Dx^{\pm}_kD^{-1}&=q^{ k}x^{\pm}_k,  \\
Kx^{\pm}_kK^{-1} &= q^{\pm 2}x^{\pm}_k,   \\
[h_k,x^{\pm}_l]&= \pm \frac{[2k]}{k}\gamma^{\mp \frac{|k|}{2}}x^{\pm}_{k+l}, \label{axcommutator}  \\
    x^{\pm}_{k+1}x^{\pm}_l &- q^{\pm 2}
x^{\pm}_lx^{\pm}_{k+1}\label{Serre}   \\
&= q^{\pm 2}x^{\pm}_kx^{\pm}_{l+1}
    - x^{\pm}_{l+1}x^{\pm}_k,\notag \\
[x^+_k,x^-_l]&=
    \frac{1}{q-q^{-1}}\left( \gamma^{\frac{k-l}{2}}\psi(k+l) -
    \gamma^{\frac{l-k}{2}}\phi(k+l)\right), \label{xcommutator}   \\
\text{where  }
\sum_{k=0}^{\infty}\psi(k)z^{-k} &= K \exp\left(
(q-q^{-1})\sum_{k=1}^{\infty} h_kz^{-k}\right),\\
\sum_{k=0}^{\infty}
\phi(-k)z^k&= K^{-1} \exp\left( - (q-q^{-1})\sum_{k=1}^{\infty}
h_{-k}z^k\right).
\end{align}
The isomorphism between  $U_q(\widehat{\mathfrak{sl}(2)})$
and $U_q$ is given by: 
\begin{align*}E_0 &\mapsto x^-_1K^{-1}, \ \ F_0 \mapsto Kx^+_{-1}, \\
E_1 &\mapsto x^+_0, \ \ F_1 \mapsto x^-_0, \\
K_0 &\mapsto \gamma K^{-1}, \ \ K_1 \mapsto K, \ \ D \mapsto D.
\end{align*}
If one uses the formal sums
\begin{equation}
\phi(u)=\sum_{p\in\mathbb Z} \phi(p)u^{-p},\enspace \psi(u)=\sum_{p\in\mathbb Z}\psi(p)u^{-p},\enspace
x^{\pm }(u)=\sum_{p\in\mathbb Z} x^\pm_pu^{-p}
\end{equation}
Drinfeld's relations (2.3), (2.8)-(2.10) can be written as
\begin{gather}
[\phi(u),\phi(v)]=0=[\psi(u),\psi(v)] \\
\phi(u)x^\pm (v)\phi(u)^{-1}=g(uv^{-1}\gamma^{\mp 1/2})^{\pm 1}x^\pm (v)\label{phix} \\
\psi(u)x^\pm (v)\psi(u)^{-1}=g(vu^{-1}\gamma^{\mp 1/2})^{\mp 1}x^\pm (v)\label{psix} \\
(u-q^{\pm2} v)x^\pm (u)x^\pm (v)=(q^{\pm 2}u-v)x^\pm(v)x^\pm(u) \\
[x^+(u),x^-(v)]=(q-q^{-1})^{-1}(\delta(u/v\gamma)\psi(v\gamma^{1/2})-\delta(u\gamma/v)\phi(u\gamma^{1/2}))\label{xx}
\end{gather}
where
$g(t)=g_q(t)=\sum_{k\geq 0}g(r)t^{k}$ is the Taylor series at $t=0$ of the function $(q^2t-1)/(t-q^2)$ and $\delta(z)=\sum_{k\in\mathbb Z}z^{k}$ is the formal Dirac delta function.
\begin{rem}
Writing $g(t)=g_q(t)=\sum_{r\geq 0}g(r)t^r$ we have
\begin{equation}\label{grcomp}
g(r)=g_q(r)=g_{q^{-1}}(r)=\begin{cases} q^{2}&\text{if}\quad r=0 \\ 
(1-q^{-4})q^{2(r+1)}=(q^{4}-1)q^{2(r-1)},&\text{if}\quad r>0.
\end{cases}
\end{equation}
\end{rem}

Considering Serre's relation  with $k=l$, we get
\begin{equation}\label{Serre1}
x^{-}_kx^{-}_{k+1}=q^2x^{-}_{k+1}x^{-}_k  
\end{equation}
The product on the right side is in the correct order for a basis  element.
If $k+1>l$ and $k\neq l$ in \eqnref{Serre}, then $k+1>  l+1$ so that $k\geq l+1$, and thus we can write
\begin{equation}\label{Serre2}
 x^{-}_lx^{-}_{k+1}=q^2x^{-}_{k+1}x^{-}_l   - x^{-}_kx^{-}_{l+1} +q^2 x^{-}_{l+1}x^{-}_k
\end{equation}
and then after repeating the above identity, we will eventually arrive at sums of terms that are in the correct order.    This is the opposite ordering of monomials as we had previously.

\section{$\Omega$-operators and the Kashiwara algebra $\mathcal K_q$}

Let $\mathbb N^{\mathbb N^*}$ denote the set of all functions from $\{k\delta\,|\, k\in\mathbb N^*\}$ to $\mathbb N$ with finite support.   Then we write
$$
h^+=h^{(s_k)}_+:=h_{r_1}^{s_1}\cdots h_{r_k}^{s_l},\quad h^-:=h^{(s_k)}_-=
h_{-r_1}^{s_1}\cdots h_{-r_k}^{s_l}
$$
for $f=(s_m)\in\mathbb N^{\mathbb N^*}$ where $f(r_m)=s_m$ and $f(t)=0$ for $t\neq r_i, m\leq i\leq  l$.

Let $ \mathcal N_q^-$, be the subalgebra of  $U_q(\widehat{\mathfrak{sl}(2)})$ generated by $\gamma^{\pm1/2}$, and $x^-_n$, $n\in\mathbb Z$. Consider $x^-(v)=\sum_nx^-_nv^{-n}$ as a formal power series of left multiplication operators $x^-_n:\mathcal N_q^-\to \mathcal N_q^-$. 

As in \cite{CFM10}, for fixed $k$ we set 
\begin{align*}
\bar P&=x^-(v_1)\cdots x^-(v_k) \\
\bar P_l&=x^-(v_{1})\cdots x^-(v_{l-1})x^-(v_{l+1})\cdots x^-(v_k),
\end{align*}
and
$$
G_l=G_l^{1/q}:=\prod_{j=1}^{l-1}g_{q^{-1}}(v_j/v_l),\quad G_l^{q}=\prod_{j=1}^{l-1}g(v_l /v_j)
$$
where $G_1:=1$.
Then we define a collection of operators $\Omega_\psi(k),\Omega_\phi(k):\mathcal N_q^-\to \mathcal N_q^-$, $k\in\mathbb Z$, in terms of the generating functions as follows.
$$
\Omega_\psi(u)=\sum_{l\in\mathbb Z}\Omega_\psi(l)u^{-l},\quad \Omega_\phi(u)=\sum_{l\in\mathbb Z}\Omega_\phi(l)u^{-l}
$$
where 
\begin{align}\label{definingomegapsi}
\Omega_\psi(u)(\bar P):&=\gamma^{m} \sum_{l=1}^kG_l
  \bar P_l \delta(u/v_l\gamma) \\
   \Omega_\phi(u)(\bar P):&=\gamma^{m}\sum_{l=1}^kG_l^q  \bar P_l\delta(u\gamma/v_l).\label{definingomegaphi}
\end{align}

Note that $\Omega_{\psi}(u)(1)=\Omega_{\phi}(u)(1)=0$.  More generally, let us write
$$
\bar P=x^-(v_1)\cdots x^-(v_k)=\sum_{n\in\mathbb Z}\sum_{n_1,n_2,\dots,n_k\in\mathbb Z\atop n_1+\cdots +n_k=n}x^-_{n_1}\cdots x^-_{n_k}v_1^{-n_1}\cdots v_k^{-n_k}
$$
Then
\begin{align*}
\psi&(u\gamma^{-1/2})\Omega_\psi(u)(\bar P)\\
&=\sum_{k\geq 0}\sum_{p\in\mathbb Z}\sum_{n_i\in\mathbb Z } \gamma^{k/2}\psi(k)\Omega_\psi(p)(x^-_{n_1}\cdots x^-_{n_k})v_1^{-n_1}\cdots v_k^{-n_k}u^{-k-p}  \\
&=\sum_{n_i\in\mathbb Z } \sum_{m\in\mathbb Z}\sum_{k\geq 0}\gamma^{k/2}\psi(k)\Omega_\psi(m-k)(x^-_{n_1}\cdots x^-_{n_k})v_1^{-n_1}\cdots v_k^{-n_k}u^{-m}
\end{align*}
while
\begin{equation*}
[x^+(u),\bar P]=\sum_{m\in\mathbb Z}\sum_{n_1,n_2,\dots,n_k\in\mathbb Z } [x^+_m,x^-_{n_1}\cdots x^-_{n_k}]v_1^{-n_1}\cdots v_k^{-n_k}u^{-m}.
\end{equation*}
Thus for a fixed $m$ and $k$-tuple $(n_1,\dots,n_k)$ the sum
$$
\sum_{k\geq 0}\gamma^{k/2}\psi(k)\Omega_\psi(m-k)(x^-_{n_1}\cdots x^-_{n_k})
$$
must be finite.  Hence
\begin{equation}\label{omegalocalfin}
\Omega_\psi(m-k)(x^-_{n_1}\cdots x^-_{n_k})=0,
\end{equation}
 for $k$ sufficiently large.

\begin{prop}[\cite{CFM10}, Prop. 4.0.3] \label{commutatorprop} We have
\begin{align}
\Omega_\psi(u)x^-(v)&=\delta(v\gamma/u)+g_{q^{-1}}(v\gamma/u)x^-(v)\Omega_\psi(u),
\label{omegapsi}\\
  \Omega_\phi(u)x^-(v)&=\delta(u\gamma/v)+g(u\gamma/v)x^-(v)\Omega_\phi(u)\label{omegaphi}  \\
(q^2u_1-u_2)\Omega_\psi(u_1)\Omega_\psi(u_2)&=(u_1-q^2u_2)\Omega_\psi(u_2)\Omega_\psi(u_1) \label{psipsi} \\
(q^2u_1-u_2)\Omega_\phi(u_1)\Omega_\phi(u_2)&=(u_1-q^2u_2)\Omega_\phi(u_2)\Omega_\phi(u_1)   \label{phiphi} \\
(q^2\gamma^2u_1-u_2)\Omega_\phi(u_1)\Omega_\psi(u_2)&=(\gamma^2u_1-q^2u_2)\Omega_\psi(u_2)\Omega_\phi(u_1).\label{omegaphipsi}
\end{align}
\end{prop}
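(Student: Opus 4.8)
The plan is to verify all five identities by evaluating both sides, as operators on $\mathcal N_q^-$, on an arbitrary product $\bar P = x^-(v_1)\cdots x^-(v_k)$; since the coefficients of such products span $\mathcal N_q^-$ over its center and each $\Omega$ is well-defined there by the local finiteness \eqref{omegalocalfin}, this suffices. I would prove the two linear relations \eqref{omegapsi} and \eqref{omegaphi} first, directly from the definitions, and then bootstrap the three quadratic relations \eqref{psipsi}, \eqref{phiphi}, \eqref{omegaphipsi} from them by induction on $k$.

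For \eqref{omegapsi} I apply $\Omega_\psi(u)$ to the $(k+1)$-fold product $x^-(v)\bar P$ via \eqref{definingomegapsi} and split the deletion sum into two parts. Deleting the prepended factor $x^-(v)$ gives the $G_1 = 1$ term $\bar P\,\delta(u/v\gamma)$, which by $\delta(z) = \delta(1/z)$ is the inhomogeneous term $\delta(v\gamma/u)\bar P$ (the $\gamma$-prefactors matching by centrality of $\gamma$). Deleting an original factor $x^-(v_l)$ gives $G_{l+1}\,x^-(v)\bar P_l\,\delta(u/v_l\gamma)$, and prepending $x^-(v)$ inserts exactly one new factor into the defining product, so $G_{l+1} = g_{q^{-1}}(v/v_l)\,G_l$. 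Using the substitution rule $\delta(u/v_l\gamma)f(u) = \delta(u/v_l\gamma)f(v_l\gamma)$ to rewrite $g_{q^{-1}}(v\gamma/u)$ as $g_{q^{-1}}(v/v_l)$ on the support of the delta, these terms reassemble into $g_{q^{-1}}(v\gamma/u)\,x^-(v)\,\Omega_\psi(u)(\bar P)$, which is \eqref{omegapsi}. Relation \eqref{omegaphi} is the same computation with $g$, $G_l^q$ and $\delta(u\gamma/v_l)$ replacing their $\psi$-analogues.

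For the quadratic relations I argue by induction on $k$, the base case $k = 0$ being immediate since $\Omega_\psi(u)(1) = \Omega_\phi(u)(1) = 0$. Writing $L$ and $R$ for the two sides of \eqref{psipsi}, I pass both $\Omega$'s through the leftmost factor $x^-(v)$ of $\bar P$ using \eqref{omegapsi}. Each side then produces one term proportional to $x^-(v)$ carrying the common symmetric scalar $g_{q^{-1}}(v\gamma/u_1)\,g_{q^{-1}}(v\gamma/u_2)$ times $\Omega_\psi(u_1)\Omega_\psi(u_2)$ (resp. $\Omega_\psi(u_2)\Omega_\psi(u_1)$), together with two inhomogeneous $\delta$-terms. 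The $x^-(v)$-terms agree by the inductive hypothesis applied to the shorter word. Matching the $\delta$-terms, after substituting $u_2 = v\gamma$ (resp. $u_1 = v\gamma$) via the delta, reduces to the single scalar identity $q^2u_1 - v\gamma = (u_1 - q^2 v\gamma)\,g_{q^{-1}}(v\gamma/u_1)$, which holds because $g_{q^{-1}}(v\gamma/u_1) = (q^2u_1 - v\gamma)/(u_1 - q^2 v\gamma)$ as rational functions; \eqref{phiphi} is identical with $g$ in place of $g_{q^{-1}}$.

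The mixed relation \eqref{omegaphipsi} follows the same induction, now passing $\Omega_\phi(u_1)$ and $\Omega_\psi(u_2)$ through $x^-(v)$ by \eqref{omegaphi} and \eqref{omegapsi}; the surviving $x^-(v)$-terms share the common scalar $g(u_1\gamma/v)\,g_{q^{-1}}(v\gamma/u_2)$ and match by induction. The two $\delta$-terms now call for the substitutions $u_2 = v\gamma$ and $v = u_1\gamma$, reducing respectively to $q^2\gamma^2u_1 - v\gamma = (\gamma^2u_1 - q^2 v\gamma)\,g(u_1\gamma/v)$ and $(q^2\gamma^2u_1 - u_2)\,g_{q^{-1}}(u_1\gamma^2/u_2) = \gamma^2u_1 - q^2u_2$, both immediate from the rational forms of $g$ and $g_{q^{-1}}$; here the factor $\gamma^2$ in the prefactors is exactly what makes these two functional equations close. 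I expect the only real difficulty to be the bookkeeping of the formal delta calculus — keeping $\delta(a/b)f(b) = \delta(a/b)f(a)$, the symmetry $\delta(z) = \delta(1/z)$, and the half-integer $\gamma$-powers mutually consistent — rather than anything conceptual, with well-definedness of every infinite sum secured by \eqref{omegalocalfin}.
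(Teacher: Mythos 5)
Your proposal is correct, and it is worth noting at the outset that this paper contains no proof of \propref{commutatorprop} at all: the statement is imported verbatim from \cite{CFM10}, Prop.~4.0.3, so any argument must be reconstructed from the definitions \eqnref{definingomegapsi}--\eqnref{definingomegaphi}, which is what you do. Your derivation of the linear relations \eqnref{omegapsi}--\eqnref{omegaphi} (splitting the deletion sum for $x^-(v)\bar P$, with the prepended-factor term giving $\delta(v\gamma/u)$ via $\delta(z)=\delta(1/z)$, and $G_{l+1}=g_{q^{-1}}(v/v_l)G_l$ plus the substitution rule on the support of the delta reassembling the rest) is essentially the computation underlying the source; where you genuinely diverge is in obtaining the three quadratic relations by induction on the length of the monomial, bootstrapped from the linear ones, rather than by expanding double deletion sums directly. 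I checked the bookkeeping: the $x^-(v)$-terms carry the symmetric scalars $g_{q^{-1}}(v\gamma/u_1)g_{q^{-1}}(v\gamma/u_2)$, $g(u_1\gamma/v)g_{q^{-1}}(v\gamma/u_2)$, etc., and close under the inductive hypothesis, while the $\delta$-terms reduce exactly to your rational identities $(u_1-q^2v\gamma)\,g_{q^{-1}}(v\gamma/u_1)=q^2u_1-v\gamma$, $(u_1-q^2u_2)\,g(u_1/u_2)=q^2u_1-u_2$, and $(q^2\gamma^2u_1-u_2)\,g_{q^{-1}}(u_1\gamma^2/u_2)=\gamma^2u_1-q^2u_2$, each an instance of $(t-q^{\pm2})g_{q^{\pm1}}(t)=q^{\pm2}t-1$ read as a power-series identity (so the rational-function manipulation is legitimate, since $g_{q^{\pm1}}$ is by definition the Taylor expansion at $t=0$); the $\gamma^2$ in \eqnref{omegaphipsi} is, as you say, precisely what makes the two mixed functional equations close. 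Your route buys a cleaner proof that isolates exactly which $g$-identities carry each relation. Two small things you should make explicit rather than wave at: (i) the exponent $m$ in the prefactor $\gamma^m$ of \eqnref{definingomegapsi} is never defined in this paper, and your argument needs it to be independent of the number of factors $k$; this is forced by consistency with the component formula \eqnref{omegapsi6} (e.g.\ applying \eqnref{omegapsi} to $1$ forces the one-factor normalization), and ``centrality of $\gamma$'' alone does not settle it; (ii) the paper's Remark around \eqnref{grcomp} asserting $g_q(r)=g_{q^{-1}}(r)$ is inconsistent with the displayed rational function $(q^2t-1)/(t-q^2)$ (whose constant term is $q^{-2}$, not $q^2$), and your reading, with $g_{q^{-1}}(t)=(q^{-2}t-1)/(t-q^{-2})$ obtained from $g_q$ by $q\mapsto q^{-1}$, is the one under which all five relations and \eqnref{omegapsi6} actually hold, so you should state that convention. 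Finally, as you note, well-definedness of the composites $\Omega(u_1)\Omega(u_2)$ is a componentwise matter secured by \eqnref{omegalocalfin}.
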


The identities in \propref{commutatorprop} can be rewritten as
\begin{align}
(q^2v\gamma- u)\Omega_\psi(u)x^-(v)&=(q^2v\gamma- u)\delta(v\gamma/u)+(q^2v\gamma -u)x^-(v)\Omega_\psi(u),
\label{omegapsi2}\\
(q^2v- u\gamma)  \Omega_\phi(u)x^-(v)&=(q^2v- u\gamma)\delta(v/u\gamma)+( v-q^2u\gamma)x^-(v)\Omega_\phi(u)\label{omegaphi3}
\end{align}
which may be written out in terms of components as
\begin{align}
&q^2\gamma\Omega_\psi(m)x^-(n+1)- \Omega_\psi(m+1)x^-_n\label{omegapsi4} \\
&\quad =(q^2\gamma-1)\delta_{m,-n-1}+ \gamma x^-_{n+1}\Omega_\psi(m)-q^2x^-_n\Omega_\psi(m+1),
\notag \\
&  q^2\Omega_\phi(m)x^-(n+1)-  \gamma\Omega_\phi(m+1)x^-(n) \label{omegaphi5} \\
&\qquad =(q^2- \gamma)\delta_{m,-n-1}+ x^-(n+1)\Omega_\psi(m)-q^2\gamma x^-(n)\Omega_\psi(m+1).\notag 
\end{align}

  We also have by \eqnref{omegaphipsi}
  \begin{equation}\label{omegaphipsi2}
    \Omega_\psi(k)\Omega_\phi(m)= \sum_{r\geq 0}g (r)\gamma^{2r}\Omega_\phi(r+m)\Omega_\psi(k-r),
\end{equation}
as operators on $\mathcal N_q^-$.

  We can also write \eqnref{omegapsi} in terms of components and as operators on $\mathcal N_q^-$
\begin{equation}\label{omegapsi6}
    \Omega_\psi(k)x^-(m)=\delta_{k,-m}\gamma^{k}+\sum_{r\geq 0}g_{q^{-1}}(r)x^-(m+r)\Omega_\psi(k-r)\gamma^{r}.
\end{equation}
    The sum on the right hand side turns into a finite sum when applied to an element in $\mathcal N_q^-$, due to \eqnref{omegalocalfin}.

\begin{prop}\cite{CFM10} \label{form}
There is a unique nondegenerate symmetric bilinear form $(\enspace, \enspace)$ defined on $\mathcal N^-_q$ satisfying
$$
(x^-_ma,b)=(a,\Omega_\psi(-m)b),\quad (1,1)=1.
$$
\end{prop}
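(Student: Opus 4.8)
The plan is to build the form directly from the adjunction requirement, treating well-definedness as the substantive point and reading off uniqueness, symmetry, and nondegeneracy afterward. The governing philosophy is Kashiwara's: left multiplication by $x^-_m$ and the operator $\Omega_\psi(-m)$ are to be mutually adjoint, and because the latter already act on $\mathcal N^-_q$ and satisfy prescribed relations, one can turn the adjunction into a definition.

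First I would record the algebraic input. The algebra $\mathcal N^-_q$ is generated by the central elements $\gamma^{\pm 1/2}$ together with the $x^-_n$, $n\in\mathbb Z$, subject only to centrality of $\gamma$ and the Drinfeld--Serre relations \eqnref{Serre}; by \eqnref{Serre1}--\eqnref{Serre2} the ordered monomials $\gamma^{s/2}x^-_{n_1}\cdots x^-_{n_k}$ with $n_1\le\cdots\le n_k$ span $\mathcal N^-_q$, and I will take them as a basis. This yields two compatible gradings, by length $k$ and by total degree $\sum_i n_i$, together with a projection $\epsilon\colon\mathcal N^-_q\to\mathbb F(q^{1/2})[\gamma^{\pm 1/2}]$ onto the length-zero component, normalized by $\epsilon(1)=1$ and $\mathbb F(q^{1/2})[\gamma^{\pm 1/2}]$-linearity.

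For existence I would assign to a length-$k$ monomial $a=x^-_{m_1}\cdots x^-_{m_k}$ the reversed product of operators $\rho(a):=\Omega_\psi(-m_k)\cdots\Omega_\psi(-m_1)\in\mathrm{End}(\mathcal N^-_q)$, with $\rho(\gamma^{1/2})=\gamma^{1/2}$, and then set $(a,b):=\epsilon(\rho(a)\,b)$, extended bilinearly. With this definition one has $\rho(x^-_m a)=\rho(a)\Omega_\psi(-m)$, so the identity $(x^-_m a,b)=\epsilon(\rho(a)\Omega_\psi(-m)b)=(a,\Omega_\psi(-m)b)$ and the normalization $(1,1)=\epsilon(1)=1$ are immediate. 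Everything therefore rests on showing that $\rho$ is well defined, i.e.\ that $a\mapsto\rho(a)$ factors through the defining relations of $\mathcal N^-_q$, and this is the main obstacle. Centrality of $\gamma^{1/2}$ transfers to the operators since $\gamma$ is central, so the content is that $\{\Omega_\psi(-m)\}$ satisfies the \emph{opposite} Serre relations to $\{x^-_m\}$: extracting the coefficient of $u_1^{-p}u_2^{-s}$ from \eqnref{psipsi} gives exactly $q^2\Omega_\psi(p+1)\Omega_\psi(s)-\Omega_\psi(p)\Omega_\psi(s+1)=\Omega_\psi(s)\Omega_\psi(p+1)-q^2\Omega_\psi(s+1)\Omega_\psi(p)$, which is \eqnref{Serre} read off for the reversed ordering. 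Hence $\rho$ descends to a well-defined anti-homomorphism $\mathcal N^-_q\to\mathrm{End}(\mathcal N^-_q)$ and the form exists.

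Uniqueness is then formal: the adjunction together with $(1,1)=1$ lets one peel the leftmost generator off the first argument to reduce its length, while symmetry does the same to the second, so the value on any pair of basis monomials is forced; in particular $(1,b)=(b,1)=0$ whenever $b$ has positive length, so the form is concentrated within each bidegree. For symmetry I would verify it on generators, where \eqnref{omegapsi6} gives the diagonal pairing $(x^-_m,x^-_n)=\delta_{m,n}\gamma^{-m}$, and then propagate it through the recursion, using the symmetry $g_q(r)=g_{q^{-1}}(r)$ recorded in the Remark to see that the normal-ordered expansion of $(a,b)$ is invariant under interchanging $a$ and $b$. Finally, nondegeneracy comes from the same computation: fixing a bidegree and ordering the ordered monomials by their sorted index tuples, the recursion \eqnref{omegapsi6} shows the Gram matrix is triangular with each diagonal entry a nonzero power of $\gamma$ times a nonzero element of $\mathbb F(q^{1/2})$, hence invertible on every bigraded piece, giving nondegeneracy of the whole form. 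I expect the well-definedness of $\rho$ to be the crux, with the symmetry bookkeeping the most tedious part.
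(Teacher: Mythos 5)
Note first that this paper contains no proof of \propref{form} at all: it is quoted from \cite{CFM10}, so there is no in-text argument to compare yours against, and your proposal must be judged on its own merits. On those merits it is essentially correct, and it is the natural Kashiwara-style construction. The crux you identified checks out: extracting coefficients from \eqnref{psipsi} gives \eqnref{omegapsi3}, and substituting there $k\mapsto -l-1$, $l\mapsto -k-1$ yields exactly $\Omega_\psi(-k-1)\Omega_\psi(-l)-q^2\Omega_\psi(-l)\Omega_\psi(-k-1)=q^2\Omega_\psi(-k)\Omega_\psi(-l-1)-\Omega_\psi(-l-1)\Omega_\psi(-k)$, which is the image of \eqnref{xminusreln} under reversal with $x^-_m\mapsto\Omega_\psi(-m)$; so $\rho$ is a well-defined anti-homomorphism \emph{provided} $\mathcal N_q^-$ is presented by centrality of $\gamma^{\pm 1/2}$ and the quadratic relation alone. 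You assert this presentation rather than prove it; it does hold (the straightening \eqnref{Serre1}--\eqnref{Serre2} shows ordered monomials span the abstractly presented algebra, and their linear independence in $U_q$ makes the surjection an isomorphism), but since this is precisely where existence could fail, it deserves a sentence. Two smaller corrections. For symmetry, your sketch does close up as an induction on total length: by \eqnref{omegapsi6}, $(x^-_ma,x^-_nc)=\delta_{m,n}\gamma^{-m}(a,c)+\sum_{r\ge0}g(r)\gamma^r\bigl(a,\,x^-_{n+r}\Omega_\psi(-m-r)c\bigr)$, and the inductive hypothesis plus adjunction turns each summand into $\bigl(\Omega_\psi(-m-r)c,\,\Omega_\psi(-n-r)a\bigr)$, manifestly symmetric under $(m,a)\leftrightarrow(n,c)$ --- but note the identity $g_q(r)=g_{q^{-1}}(r)$, to which you attribute the symmetry, plays no real role: the same coefficients $g(r)$ appear on both sides automatically. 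For nondegeneracy, your triangularity claim is asserted rather than proved, and since each bigraded piece of $\mathcal N_q^-$ is \emph{infinite}-dimensional you genuinely need some such structure (the congruence of \propref{symmetricform} alone would require a $q$-adic valuation argument on finite linear combinations). In fact a strong form holds once symmetry is in hand: by \eqnref{omegapsi6} a nonzero contribution to $(x_{\mathbf m},x_{\mathbf k})$ requires some $k_j\ge\max\mathbf m$, and symmetry then forces $\max\mathbf m=\max\mathbf k$ with all shifts $r_i=0$, so inductively the Gram matrix on ordered monomials is \emph{diagonal}, with entries of the form $\gamma^{-\sum_i m_i}$ times products of $1+q^2+\cdots+q^{2(t-1)}$, nonzero as $q$ is not a root of unity. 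Finally, in the uniqueness step your peeling argument pins down $(1,c)$ only after one fixes how $\gamma^{\pm1/2}$ interacts with the form (e.g.\ $\mathbb F(q^{1/2})[\gamma^{\pm1/2}]$-bilinearity, which your $\epsilon$ builds in); this ambiguity is already present in the statement of the proposition itself and is harmless in the category at issue, where $\gamma$ acts by $1$, but you should flag that you are proving uniqueness within that class of forms.
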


  For $\mathbf m=(m_1,\dots, m_n)$ set
$$
x_{\mathbf m}=x_{m_1}^-\cdots x_{m_n}^-
$$
and define the length of such a Poincare-Birkhoff-Witt basis element to be $|\mathbf m|=n$.
  \begin{prop}\cite{CFM17}\label{symmetricform}  For  $\mathbf m=(m_1,\dots, m_n)\in\mathbb Z^n$, and $\mathbf k=(k_1,\dots, k_l)\in \mathbb Z^l$, if $n>l$, then
  \begin{equation} \label{orthonormal} 
 (x_{\mathbf m},x_{\mathbf k}) =0 . 
 \end{equation}
 On the other hand if $n=l$ with 
\begin{gather*}
  m_1\geq m_2\geq \cdots \geq m_n,\quad k_1\geq k_2\geq \cdots \geq k_n, \\
  \sum_{i=1}^nm_i=  \sum_{i=1}^nk_i
\end{gather*}
   we have
 \begin{equation} \label{orthonormal} 
 (x_{\mathbf m},x_{\mathbf k}) \equiv\delta_{\mathbf m,\mathbf k}\mod q^2\mathbb Z[\![q]\!].
\end{equation}

  \end{prop}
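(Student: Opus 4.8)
The plan is to induct on the common length, stripping off the leftmost generator with the adjoint relation of \propref{form} and evaluating the resulting $\Omega_\psi$-action by means of the component identity \eqnref{omegapsi6}, all reduced modulo $q^2$. Part~1 is a graded vanishing and needs no induction. Applying the adjoint $l$ times gives $(x_{\mathbf m},x_{\mathbf k})=(x^-_{m_{l+1}}\cdots x^-_{m_n},\,\Omega_\psi(-m_l)\cdots\Omega_\psi(-m_1)x_{\mathbf k})$; since each $\Omega_\psi$ lowers the length by one by \eqnref{definingomegapsi}, the right-hand argument lies in the length-zero part $\mathbb F(q^{1/2})[\gamma^{\pm1/2}]$. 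As $n>l$, one further adjoint step leaves $\Omega_\psi(-m_{l+1})$ acting on this scalar, and $\Omega_\psi(u)$ annihilates the length-zero part because $\Omega_\psi(u)(1)=0$; hence $(x_{\mathbf m},x_{\mathbf k})=0$.

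For Part~2 I first reduce \eqnref{omegapsi6} modulo $q^2$. By \eqnref{grcomp} one has $g(0)=q^2\equiv0$, $g(1)=q^4-1\equiv-1$, and $g(r)\equiv0$ for $r\ge2$, and $g_{q^{-1}}(r)=g(r)$, so \eqnref{omegapsi6} collapses to
$$\Omega_\psi(k)x^-_m\equiv \delta_{k,-m}\gamma^{k}-\gamma\,x^-_{m+1}\,\Omega_\psi(k-1)\pmod{q^2},$$
the sum being finite by \eqnref{omegalocalfin}. Unfolding this along $x_{\mathbf k}=x^-_{k_1}\cdots x^-_{k_n}$, the operator $\Omega_\psi(-m_1)$ either contracts a factor $x^-_{k_j}$, which forces $k_j=m_1+j-1$, or moves past it with the shift $x^-_{k_i}\mapsto x^-_{k_i+1}$ and a factor $-\gamma$; the accumulated power of $\gamma$ turns out to be $\gamma^{-m_1}$ independent of the contraction site, whence
$$\Omega_\psi(-m_1)x_{\mathbf k}\equiv \gamma^{-m_1}\sum_{j:\,k_j=m_1+j-1}(-1)^{j-1}\,x^-_{k_1+1}\cdots x^-_{k_{j-1}+1}\,x^-_{k_{j+1}}\cdots x^-_{k_n}\pmod{q^2}.$$

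The decisive point is that every surviving summand is again weakly decreasing: within the shifted block $k_1+1\ge\cdots\ge k_{j-1}+1$ and within the tail $k_{j+1}\ge\cdots\ge k_n$, while at the junction $k_{j-1}+1>k_{j+1}$ since $k_{j-1}\ge k_{j+1}$. Because $\{k_j\}$ is non-increasing and $\{m_1+j-1\}$ strictly increasing, the condition $k_j=m_1+j-1$ holds for at most one $j$. Pairing with $x^-_{m_2}\cdots x^-_{m_n}$ and invoking the induction hypothesis for the ordered tuples $\mathbf m'=(m_2,\dots,m_n)$ and the contracted $\mathbf k''$ (of length $n-1$ and equal coordinate sum), we get $(x_{\mathbf m},x_{\mathbf k})\equiv\pm\,\gamma^{-m_1}\delta_{\mathbf m',\mathbf k''}\pmod{q^2}$. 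A direct comparison shows $\mathbf m'=\mathbf k''$ can only occur for $j=1$ with $k_1=m_1$, i.e.\ when $\mathbf m=\mathbf k$: for $j\ge2$ one would have $m_2=k_1+1\ge k_j+1=m_1+j>m_1$, contradicting $m_2\le m_1$. Thus off-diagonal entries lie in $q^2\mathbb Z[\![q]\!]$, and on the diagonal only the $j=1$ contraction survives at each step, giving $(x_{\mathbf m},x_{\mathbf m})\equiv1\pmod{q^2}$ once the invertible central factor $\gamma$ is normalized to $1$.

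The main obstacle is precisely this off-diagonal control after a generator has been stripped off: a priori the contracted tuple $\mathbf k''$ need not be ordered, which would break the direct appeal to the induction hypothesis and force a detour through the reordering relations \eqnref{Serre1}--\eqnref{Serre2}. The observation that $\mathbf k''$ stays weakly decreasing is what keeps the induction self-contained, and establishing it, together with the bookkeeping of the sign and the $\gamma$-powers, is the technical heart of the argument.
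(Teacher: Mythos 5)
Your argument is correct in outline and is essentially the standard proof (the paper itself gives no proof, deferring to \cite{CFM17}, where the argument runs along the same lines): strip $x^-_{m_1}$ via the adjunction of \propref{form}, collapse \eqnref{omegapsi6} modulo $q^2$ using \eqnref{grcomp} (only $g(1)=q^4-1\equiv -1$ survives, since $g(0)=q^2$ and $g(r)$ for $r\ge 2$ are divisible by $q^2$), and induct on the length. Your computations check out: the accumulated factor at a contraction site $j$ is indeed $(-\gamma)^{j-1}\gamma^{-m_1-(j-1)}=(-1)^{j-1}\gamma^{-m_1}$; the contracted tuple $\mathbf k''$ is weakly decreasing with $\sum\mathbf k''=\sum_{i\ge 2}m_i$, so the inductive hypothesis applies; at most one $j$ can satisfy $k_j=m_1+j-1$ since $k_j$ is non-increasing while $m_1+j-1$ is strictly increasing; and the comparison ruling out $\mathbf m'=\mathbf k''$ for $j\ge 2$ is right. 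Part~1 via $\Omega_\psi(u)(1)=0$ is also fine.

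One claim in your closing paragraph is overstated and hides the only real gap: it is not true that the ordering of $\mathbf k''$ lets the induction bypass the reordering relations \eqnref{Serre1}--\eqnref{Serre2} altogether. The congruence $(x_{\mathbf m},x_{\mathbf k})\equiv\pm(x_{\mathbf m'},x_{\mathbf k''})\pmod{q^2\mathbb Z[\![q]\!]}$ requires that the discarded remainder --- $q^2$ times a $\mathbb Z[q][\gamma^{\pm1}]$-combination of length-$(n-1)$ monomials arising from the $r=0$ and $r\ge 2$ terms of \eqnref{omegapsi6} (a finite sum by \eqnref{omegalocalfin}) --- pair with $x_{\mathbf m'}$ into $\mathbb Z[\![q]\!]$. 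These remainder monomials are homogeneous of the correct weight and length, but they are in general \emph{not} ordered, so before you can invoke the integrality half of your inductive hypothesis ($(x_{\mathbf m'},x_{\mathbf j})\in\mathbb Z[\![q]\!]$ for ordered $\mathbf j$) you must rewrite them as combinations of ordered monomials via \eqnref{Serre2}; fortunately that rewriting has coefficients in $\mathbb Z[q^2]$ and terminates, so the lattice is preserved and the step goes through. Thus the proof is patchable exactly as you feared in your last paragraph, but as written the mod-$q^2$ pairing step is not self-contained. The $\gamma$-normalization is harmless here, since in the reduced setting $\lambda(c)=0$ and $\gamma$ acts by $1$.
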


The Kashiwara algebra $\mathcal K_q$ is defined to be the $\mathbb F(q^{1/2})$-subalgebra of $\text{End}\,( \mathcal N_q^-)$ generated by $\Omega_\psi(m),x^-_n,\gamma^{\pm 1/2}$, $m,n\in\mathbb Z$, $\gamma^{\pm 1/2}$.  Then the $\gamma^{\pm 1/2}$ are central and the following relations (which are implied by \eqnref{omegapsi6}) are satisfied
\begin{align}
q^2\gamma\Omega_\psi(m)&x^-_{n+1}-  \Omega_\psi(m+1)x^-_n \\
&=(q^2\gamma-1)\delta_{m,-n-1}+ \gamma x^-_{n+1}\Omega_\psi(m)-q^2x^-_n\Omega_\psi(m+1) \notag \\\notag \\
q^2 \Omega_\psi(k+1)&\Omega_\psi(l) -
\Omega_\psi(l)\Omega_\psi(k+1)  =  \Omega_\psi(k)\Omega_\psi(l+1)
    - q^2\Omega_\psi(l+1)\Omega_\psi(k)\label{omegapsi3}
\end{align}
\begin{equation}
 x^{-}_lx^{-}_{k+1}-q^2x^{-}_{k+1}x^{-}_l  =q^2 x^{-}_{l+1}x^{-}_k - x^{-}_kx^{-}_{l+1} \label{xminusreln}
\end{equation}
 together with
\[
\gamma^{1/2}\gamma^{-1/2}=1=\gamma^{-1/2}\gamma^{1/2}.
\]

Then $\mathcal N_q^-$ is a simple $\mathcal K_q$-module (\cite{CFM10}, Theorem 7.0.14).

\section{Quantized Imaginary Verma Modules and Category $\mathcal O^q_{\text{red,im}}$}

We begin by recalling some basic facts for the affine
Kac-Moody algebra $ \widehat{\mathfrak{sl}(2)}$ over field $\mathbb F$  with
generalized Cartan matrix $A=(a_{ij})_{0\le i,j \le 1} = \begin{pmatrix}2&-2 \\ -2&\
2 \\ \end{pmatrix}$ \\
and its imaginary Verma modules. We use notations in  \cite{K}.

$$
\mathfrak{g}= \widehat{\mathfrak{sl}(2)}
= {\mathfrak sl}(2) \otimes \mathbb F[t,t^{-1}] \oplus \mathbb F c \oplus
\mathbb F d
$$
with Lie bracket relations
\begin{align*}[x\otimes t^n,y\otimes t^m]&= [x,y] \otimes t^{n+m}
+ n \delta_{n+m,0}(x,y)c, \\
[x \otimes t^n,c] = 0 = [d,c] , \quad & \quad [d,x \otimes t^n] = nx \otimes t^n,
\end{align*}for $x,y \in {\mathfrak sl}(2)$, $n,m \in \mathbb Z$, where $(\ , \ )$ denotes the
trace form on ${\mathfrak sl}(2)$.
For $x \in {\mathfrak sl(2)}$ and $n \in \mathbb Z$, we write $x(n)$ for
$x \otimes t^n$. We consider the usual basis for ${\mathfrak sl}(2)$:

$$
\left\{e=\begin{pmatrix} 0 & 1\\ 0 & 0 \end{pmatrix},\quad f=\begin{pmatrix} 0 & 0\\ 1 & 0 \end{pmatrix},\quad h=\begin{pmatrix} 1 & 0\\ 0 & -1 \end{pmatrix}\right\}.
$$
Then $\{e_0=f(1), e_1=e(0), f_0=e(-1), f_1=f(0), h_0=-h(0)+c, h_1=h(0), d\}$ generate $\mathfrak{g}$ and $\mathfrak{h} = {\rm span}\{h_0, h_1, d\}$ is the Cartan subalgebra.

Let $\Delta$ denote the root system of  $ \widehat{\mathfrak{sl}(2)}$,
$\{ \alpha_0, \alpha_1\}$ the simple roots, $\delta = \alpha_0 + \alpha_1$
the minimal imaginary root , $\{\Lambda_0, \Lambda_1\}$ fundamental weights and $P = \mathbb Z \Lambda_0 \oplus \mathbb Z \Lambda_1 \oplus \mathbb{Z}\delta$  the weight lattice.  Then
$$
\Delta = \{ \pm \alpha_1 + n\delta\ |\ n \in \mathbb Z\} \cup \{ k\delta\ |\ k \in \mathbb Z
\setminus \{ 0 \} \}.
$$

A subset $S$ of the root system $\Delta$ is called {\it closed}
if $\alpha, \beta \in S$ and
$\alpha+\beta \in \Delta$
implies $\alpha+\beta \in S$.  The subset $S$ is called a {\it closed
partition } of the roots if $S$ is closed,
$S \cap(-S) = \emptyset$, and $S\cup -S = \Delta$ \cite{JK},\cite{MR89m:17032},\cite{MR1078876},\cite{MR1175820}.
The set
$$
S= \{ \alpha_1+k\delta \ |\ k\in \mathbb Z \} \cup \{l\delta\ |\ l \in \mathbb Z_{>0} \}
$$
is a closed partition of $\Delta$ and is $W\times
\{\pm{1}\}$-inequivalent to the standard
partition of the root system into positive and negative roots \cite{MR95a:17030}.

Let ${\mathfrak g}_{\pm}^{(S)}=\sum_{\alpha \in S}
\hat{\mathfrak g}_{\pm \alpha}$.  We have
that ${\mathfrak g}_+^{(S)}$ is the
subalgebra generated by $e(k)$ $(k \in \mathbb Z)$ and $h(l)$ $(l\in \mathbb Z_{>0})$
and ${\mathfrak g}_-^{(S)}$ is the subalgebra generated by $f(k)$ $(k \in \mathbb Z)$ and
$h(-l)$ $(l\in \mathbb Z_{>0})$.  Since $S$ is a partition of the root system,
the algebra has a direct sum decomposition:
$$
{\mathfrak g}={\mathfrak g}_{-}^{(S)} \oplus {\mathfrak h} \oplus {\mathfrak g}_{+}^{(S)}.
$$
Let $U({\mathfrak g}_{\pm}^{(S)})$ be the universal enveloping algebra of
${\mathfrak g}_{\pm}^{(S)}$. Then, by the PBW theorem, we have
$$
U({\mathfrak g}) \cong U({\mathfrak g}_{-}^{(S)}) \otimes U({\mathfrak h})\otimes U({\mathfrak g}_{+}^{(S)}),
$$
where $U({\mathfrak g}_{+}^{(S)})$ is generated by $ e(k)$ $(k\in \mathbb Z)$, $h(l)$
$(l\in \mathbb Z_{>0})$,
$U({\mathfrak g}_{-}^{(S)})$ is generated by $f(k)$ $(k\in \mathbb Z)$, $h(-l)$ $(l\in
\mathbb Z_{>0})$ and $U({\mathfrak h})$,
the universal enveloping algebra of ${\mathfrak h}$.
A $U({\mathfrak g})$-module $V$ is called a {\it weight} module if
$V=\oplus_{\mu \in P} V_{\mu}$, where
$$
V_{\mu}=\{ v \in V\ |\ h\cdot v=\mu(h)v, c\cdot v=\mu(c)v,
d\cdot v = \mu(d)v \}.
$$
Any submodule of a weight module is a weight module.
A $U({\mathfrak g})$-module $V$ is
called an {\it $S$-highest weight module}
with highest weight $\lambda \in P$ if there is a
non-zero $v_{\lambda} \in V$ such that
(i) $u^+ \cdot v_{\lambda} = 0$ for all $u^+\in U({\mathfrak g}_{+}^{(S)})
\setminus \mathbb F^*$, (ii) $h\cdot v_{\lambda}=\lambda(h)v_{\lambda}$, $c\cdot v_{\lambda}
= \lambda(c)v_{\lambda}$, $d\cdot v_{\lambda}  = \lambda(d)v_{\lambda}$,
(iii) $V=U(\hat{\mathfrak g})\cdot v_{\lambda} = U({\mathfrak g}_{-}^{(S)}) \cdot v_{\lambda}$.
An $S$-highest weight module is a weight module.

For $\lambda \in P$, let $I_S(\lambda)$ denote the ideal of $U(\mathfrak g)$
generated by
$e(k)$ $(k\in \mathbb Z)$, $h(l)$ $(l>0)$, $h-\lambda(h) 1$,
$c-\lambda(c) 1$, $d-\lambda(d) 1$.
Then  $M(\lambda) = U(\mathfrak g)/I_S(\lambda)$ is the {\it
imaginary Verma module} of
$\mathfrak g$ with highest weight $\lambda$.
Imaginary Verma modules have many structural features similar to those of
standard Verma modules,
with the exception of the infinite-dimensional weight spaces \cite{MR95a:17030}.
It was shown in  \cite {MR97k:17014} that the imaginary Verma modules $M(\lambda)$ for $\mathfrak g$ can be $q$-deformed to the quantum imaginary Verma module $M_q(\lambda)$ 
for $U_q(\mathfrak g)$ in such a way that the weight multiplicities, both finite and infinite-dimensional, are preserved. Indeed for $\lambda \in P$ the quantum imaginary Verma module $M_q(\lambda)$ can be described as follows.
Denote by $I^q(\lambda)$ the ideal of
$U_q$ generated by $x^+(k)$, $k\in
\mathbb Z$, $a(l), l>0$, $K^{\pm 1}-q^{\lambda(h)}1$, $\gamma^{\pm
\frac{1}{2}}-q^{\pm \frac{1}{2}\lambda(c)}1$ and $D^{\pm 1}-q^{\pm
\lambda(d)}1$. Then
$$M_q(\lambda)=U_q/I^q(\lambda).$$

\begin{thm}[\cite{MR97k:17014}, Theorem 3.6]  The imaginary Verma module $M_q(\lambda)$ is simple
 if and only if $\lambda(c)\neq 0$.
\end{thm}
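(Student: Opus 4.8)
\section{Proof proposal}

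The plan is to put a contravariant (Shapovalov-type) bilinear form on $M_q(\lambda)$ and reduce simplicity to its nondegeneracy, which splits into an $\mathcal N_q^-$-factor that is always nondegenerate and a Heisenberg factor that degenerates precisely when $\lambda(c)=0$. First I would fix the PBW description $M_q(\lambda)=U_q^-\cdot v_\lambda$, where $U_q^-$ is generated by the $x^-_n$ $(n\in\mathbb Z)$ and the $h_{-l}$ $(l>0)$, and record that $M_q(\lambda)$ is a weight module on which $\gamma$ acts by the scalar $q^{\lambda(c)}$. Using the Cartan-preserving anti-automorphism of $U_q$ that interchanges raising and lowering generators, I would build the form $\langle\ ,\ \rangle$ normalized by $\langle v_\lambda,v_\lambda\rangle=1$; the relevant pairings are finite sums by the local finiteness in \eqnref{omegalocalfin}, so the form is well defined even on the infinite-dimensional weight spaces. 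The standard argument for cyclic highest weight modules then shows that the radical of $\langle\ ,\ \rangle$ is the unique maximal proper submodule, so $M_q(\lambda)$ is simple if and only if $\langle\ ,\ \rangle$ is nondegenerate on every weight space.

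For the \emph{only if} direction, suppose $\lambda(c)=0$, so $\gamma=1$ and $[h_k,h_l]=\delta_{k+l,0}\,\tfrac{[2k]}{k}\,\tfrac{\gamma^k-\gamma^{-k}}{q-q^{-1}}=0$. I claim $h_{-1}v_\lambda$ is a nonzero singular vector of weight $\lambda-\delta$. Commuting $x^+_k$ past $h_{-1}$ via \eqnref{axcommutator} produces only a multiple of $x^+_{k-1}v_\lambda=0$, so $x^+_k h_{-1}v_\lambda=0$ for all $k$; and $h_l h_{-1}v_\lambda=[h_l,h_{-1}]v_\lambda=0$ for $l>0$ because $\gamma=1$. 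Hence $U_q\cdot h_{-1}v_\lambda=U_q^-\cdot h_{-1}v_\lambda$ has all weights below $\lambda-\delta$ and misses $v_\lambda$, so it is a proper nonzero submodule and $M_q(\lambda)$ is not simple.

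For the \emph{if} direction I would show that when $\lambda(c)\neq 0$ every nonzero submodule $N$ contains $v_\lambda$, in two steps. Step (b) is clean: since $q$ is generic and $\gamma=q^{\lambda(c)}\neq 1$, the scalars $[h_k,h_{-k}]=\tfrac{[2k]}{k}\,\tfrac{\gamma^k-\gamma^{-k}}{q-q^{-1}}$ are all nonzero, so $\mathrm{span}\{h^-v_\lambda\}$ is an irreducible Fock module over the quantum Heisenberg subalgebra; thus any nonzero vector of $x^-$-degree $0$ in $N$ can be lowered by the $h_l$ $(l>0)$ to a nonzero multiple of $v_\lambda$, giving $v_\lambda\in N$ and $N=M_q(\lambda)$. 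Step (a) is to produce such a pure-Heisenberg vector: choosing $0\neq w\in N$ of minimal $x^-$-degree $r$, I would apply suitable $x^+_k$, whose commutator with $x^-(v)$ in \eqnref{xx} removes one $x^-$-factor and replaces it by $\psi$/$\phi$, which act diagonally on $v_\lambda$; if $r>0$ this yields a vector of degree $r-1$ in $N$, contradicting minimality once we know it is nonzero.

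The main obstacle is precisely this nonvanishing in Step (a): one must rule out that a minimal-degree vector is annihilated by all $x^+_k$. I expect to control this through \propref{form} and \propref{symmetricform}: on $v_\lambda$ the series $\psi(k),\phi(-k)$ collapse to the scalars realized by the operators $\Omega_\psi,\Omega_\phi$ of \propref{commutatorprop}, so the $x^+$-action on the $x^-$-part is governed by the adjointness $(x^-_m a,b)=(a,\Omega_\psi(-m)b)$ of \propref{form}. Filtering $M_q(\lambda)$ by $x^-$-degree, the associated graded pairing factors as the product of the Heisenberg Gram form and the nondegenerate $\mathcal N_q^-$-form; were some minimal-degree $w$ killed by every $x^+_k$, this would force degeneracy of the $\mathcal N_q^-$-form, contradicting \propref{form}. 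Nondegeneracy of the total form, hence simplicity, therefore holds exactly when the Heisenberg factor is nondegenerate, i.e. exactly when $\lambda(c)\neq 0$.
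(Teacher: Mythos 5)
Your ``only if'' half is correct and is in fact the standard argument: when $\lambda(c)=0$ the vector $h_{-1}v_\lambda$ is singular (by\eqnref{axcommutator} and the vanishing of the Heisenberg bracket at $\gamma=1$), and it generates a proper submodule. Your step (b) is also fine: for $\lambda(c)\neq 0$ and generic $q$ the scalars $\tfrac{[2k]}{k}\tfrac{\gamma^k-\gamma^{-k}}{q-q^{-1}}$ are nonzero, so the Fock module over the quantized Heisenberg algebra is simple. The genuine gap is exactly where you flagged it, in Step (a), and your proposed fix does not close it. The operators $\Omega_\psi,\Omega_\phi$ and the form of \propref{form} are constructed on $\mathcal N_q^-$, the subalgebra generated by the $x^-_n$ and $\gamma^{\pm 1/2}$ \emph{without} the Heisenberg generators; they govern the full $x^+$-action only in the reduced setting $\lambda(c)=0$, where $\psi(m)$ and $\phi(-m)$ for $m>0$ act by zero. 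In the case you actually need, $\lambda(c)\neq 0$, one has $M_q(\lambda)\cong \mathcal N_q^-\otimes \mathbb F[h_{-1},h_{-2},\dots]v_\lambda$, and in\eqnref{xcommutator} the series $\phi(-m)$ create Heisenberg modes while the $\psi(m)$ annihilate them. Stripping one $x^-$-factor from a minimal-degree vector $w=\sum_j x_{\mathbf m_j}H_jv_\lambda$ therefore produces, inside one and the same weight space of $x^-$-degree $r-1$, both $\Omega_\psi$-type terms paired with $\psi(\cdot)H_j$ and $\Omega_\phi$-type terms paired with $\phi(\cdot)H_j$; the hypothesis $x^+_kw=0$ for all $k$ says only that these two families cancel, which does not contradict nondegeneracy of the $\mathcal N_q^-$-form alone. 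Your asserted factorization of the associated graded pairing into ``Heisenberg Gram form times $\mathcal N_q^-$-form'' is precisely the missing lemma: to prove it you must pick a filtration (for instance by total Heisenberg degree $\sum j_u$) in which the $\Omega_\phi\otimes\phi$ contributions are strictly lower order, and verify nonvanishing of the leading coefficients. That leading-term analysis is the real content of the original proof of this theorem in \cite{MR97k:17014}, which proceeds by direct PBW commutator computations and uses no bilinear form at all; it is not supplied by \propref{form} or \propref{symmetricform}.

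A second, smaller gap is the existence of the contravariant form itself. With the nonstandard partition $S$ you need an anti-automorphism exchanging the $S$-positive part (generated by the $x^+_k$ and $h_l$, $l>0$) with the $S$-negative part while fixing the Cartan; the naive assignment $x^+_k\mapsto x^-_k$, $h_l\mapsto h_{-l}$, $K\mapsto K$ does not preserve\eqnref{xcommutator}, since under $h_l\mapsto h_{-l}$ alone the series $\psi$ and $\phi$ do not transform into one another --- one is forced to add the twists $k\mapsto -k$, $K\mapsto K^{-1}$ and $q\mapsto q^{-1}$, making the form sesquilinear over the $q\mapsto q^{-1}$ conjugation, and the ``radical is the unique maximal proper submodule'' argument then has to be rerun in that setting. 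This is routine but must be checked, since your entire reduction rests on it. With these two points supplied, your route would constitute a genuinely different, form-theoretic proof of the criterion, in the spirit of the Shapovalov-form treatment of ordinary Verma modules; as written, however, the ``if'' direction is incomplete at exactly its critical step.
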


Suppose now that $\lambda(c)=0$. Then $\gamma^{\pm \frac{1}{2}}$ acts on $M_q(\lambda)$ by $1$.  Consider the ideal
$J^q(\lambda)$ of  $U_q$ generated by $I^q(\lambda)$ and  $a(l)$ for
all $l$. Denote
$$\bar{M}_q(\lambda)=U_q/J^q(\lambda).$$
Then $\bar{M}_q(\lambda)$ is a homomorphic image of
 $M_q(\lambda)$ which is called the \emph{reduced quantized imaginary Verma
 module}. The module $\bar{M}_q(\lambda)$ has a $P$-gradation:
 $$\bar{M}_q(\lambda)=\sum_{\xi\in P}\bar{M}_q(\lambda)_{\xi}.$$
 
 \begin{thm}[\cite{CFM10}]
Let $\lambda\in  P$ be such that $\lambda(c)=0$. Then the module $\bar{M}_q(\lambda)$
is simple if and only if $\lambda(h)\neq 0$.
\end{thm}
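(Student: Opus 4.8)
The plan is to realize $\bar M_q(\lambda)$ on the space $\mathcal N_q^-$ and read off simplicity from the action of the Drinfeld raising operators. First I would invoke the triangular PBW decomposition of $U_q$ to identify $\bar M_q(\lambda)=U_q/J^q(\lambda)$ with $\mathcal N_q^-$ as a vector space, with $v_\lambda\leftrightarrow 1$ and each $x^-_n$ acting by left multiplication; this is the reduced imaginary Verma realization in which all $h_l$ (the $a(l)$ killed in $J^q(\lambda)$) act by $0$ and $\gamma^{\pm1/2}$ act by $1$. Note that $\mathcal N_q^-$ is graded by length (the number of $x^-$-factors): this is well defined because the Serre relation \eqnref{Serre} is length-homogeneous, and the length of a weight vector is pinned down by its weight through the $\alpha_1$-depth. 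On $\bar M_q(\lambda)$ we have $\gamma=1$, $K=q^{\lambda(h)}$ and $h_l=0$ for all $l$, so the series $\psi$ and $\phi$ collapse to the scalars $\psi(p)=\delta_{p,0}q^{\lambda(h)}$, $\phi(p)=\delta_{p,0}q^{-\lambda(h)}$; feeding this into the Drinfeld commutator \eqnref{xcommutator} yields
\[
[x^+_m,x^-_n]=\delta_{m+n,0}\,[\lambda(h)],\qquad [\lambda(h)]=\frac{q^{\lambda(h)}-q^{-\lambda(h)}}{q-q^{-1}}.
\]
Since this commutator is a central scalar and $x^+_m v_\lambda=0$, commuting $x^+_m$ to the right shows that $x^+_m$ acts as $[\lambda(h)]\,\partial_{-m}$, where $\partial_j$ is the derivation of $\mathcal N_q^-$ deleting one factor $x^-_j$; thus $[\partial_j,x^-_n]=\delta_{j,n}$ and $\partial_j(1)=0$.

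Suppose first that $\lambda(h)=0$. As $q$ is not a root of unity, $[\lambda(h)]=0$, so every $x^+_m$ annihilates all of $\bar M_q(\lambda)$. Then the sum of the length-$k$ subspaces with $k\geq1$ is stable under the $x^-_n$ (which raise length), under the $x^+_m$ (which act by $0$), and under the diagonal operators $K,D,\gamma$; it is a proper nonzero submodule, so $\bar M_q(\lambda)$ is not simple.

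Now suppose $\lambda(h)\neq0$, so $[\lambda(h)]\neq0$ and each $\partial_m=[\lambda(h)]^{-1}x^+_{-m}$ is a genuine operator on $\mathcal N_q^-$. The key claim is that $\bigcap_m\ker\partial_m=\mathbb F(q^{1/2})\cdot1$. I would prove it with a number operator $\mathcal D=\sum_m x^-_m\partial_m$ (a finite sum on each element): from $[\partial_m,x^-_n]=\delta_{m,n}$ one gets $[\mathcal D,x^-_n]=x^-_n$, and together with $\mathcal D(1)=0$ this forces $\mathcal D$ to act as multiplication by $k$ on the length-$k$ part. Hence a nonzero weight vector $w$ of length $k\geq1$ cannot satisfy $\partial_m w=0$ for all $m$, since that would give $0=\mathcal D w=k\,w\neq0$ (here characteristic zero is used). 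Granting the claim, let $N\subseteq\bar M_q(\lambda)$ be a nonzero submodule and pick a nonzero weight vector $w\in N$ of minimal length $k$. If $k\geq1$, some $x^+_{-m}w\in N$ is a nonzero vector of length $k-1$, contradicting minimality; so $k=0$, whence $v_\lambda\in N$ and $N=U_q\cdot v_\lambda=\bar M_q(\lambda)$. Therefore $\bar M_q(\lambda)$ is simple.

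The main obstacle is the determination of the $x^+$-action: one must check carefully that on the reduced module $\psi,\phi$ really degenerate to the stated scalars and that the resulting deletion maps descend to well-defined derivations of the quotient $\mathcal N_q^-$ (equivalently, that they preserve the ideal generated by \eqnref{Serre}), which is precisely what legitimizes the number operator computation. Once the scalar commutator is in hand, the Fock space structure of the module makes both implications essentially formal; in particular this route avoids the contravariant form of \propref{form} and \propref{symmetricform}, although nondegeneracy of that pairing would give an alternative proof of the key claim.
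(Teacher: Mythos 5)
You have a fatal misreading of the construction at the very first step: $J^q(\lambda)$ is a \emph{left} ideal, so in $\bar M_q(\lambda)=U_q/J^q(\lambda)$ the elements $h_l$ annihilate the highest weight vector $v_\lambda$ but are \emph{not} the zero operator on the module. Indeed, by \eqnref{axcommutator} (with $\gamma=1$) one has $h_k\,x^-_n v_\lambda=-\tfrac{[2k]}{k}\,x^-_{n+k}v_\lambda\neq 0$; note that if the $h_k$ did act by zero, this same relation would force all $x^-_n$ to act by zero as well, collapsing the module. Consequently $\psi$ and $\phi$ do not degenerate to scalars: from \eqnref{psix} one gets $\psi(k)\,x^-_n v_\lambda=q^{\lambda(h)}g(k)\,x^-_{n+k}v_\lambda$ with $g(k)\neq0$ for all $k\geq0$ by \eqnref{grcomp}. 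Your identity $[x^+_m,x^-_n]=\delta_{m+n,0}[\lambda(h)]$ is therefore valid only when applied to $v_\lambda$, not as an operator identity. Concretely, for $k=m+n_1>0$,
\begin{equation*}
x^+_m\,x^-_{n_1}x^-_{n_2}v_\lambda=\tfrac{1}{q-q^{-1}}\bigl(\psi(k)-\phi(k)\bigr)x^-_{n_2}v_\lambda+\delta_{m+n_2,0}[\lambda(h)]\,x^-_{n_1}v_\lambda
=\tfrac{q^{\lambda(h)}g(k)}{q-q^{-1}}\,x^-_{n_2+k}v_\lambda+\cdots,
\end{equation*}
which is nonzero even when $\lambda(h)=0$. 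This breaks both halves of your argument. For the ``if'' direction, $x^+_m$ is not $[\lambda(h)]\partial_{-m}$ for a deletion derivation; it is a $g$-dressed contraction (the deleted index gets redistributed onto the remaining factors), which is exactly what the operators $\Omega_\psi$, $\Omega_\phi$ of Section 3 encode --- compare \eqnref{omegapsi}, $\Omega_\psi(u)x^-(v)=\delta(v\gamma/u)+g_{q^{-1}}(v\gamma/u)\,x^-(v)\Omega_\psi(u)$, which is not a Heisenberg relation. So $[\partial_j,x^-_n]=\delta_{j,n}$ fails, your number operator $\mathcal D$ does not act by $k$ on the length-$k$ part, and the simplicity proof for $\lambda(h)\neq0$ collapses. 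For the ``only if'' direction, your stated reason (that every $x^+_m$ annihilates the whole module when $\lambda(h)=0$) is false by the displayed computation; the conclusion itself survives, but via a different argument: by weight considerations $x^+_m$ lowers the length grading by exactly one, and on length-one vectors $x^+_m x^-_n v_\lambda=\delta_{m+n,0}[\lambda(h)]v_\lambda=0$ when $\lambda(h)=0$, so the span of monomials of length $\geq1$ is a proper nonzero submodule.

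For comparison: the paper does not prove this theorem here but quotes it from \cite{CFM10}, where the proof runs through precisely the machinery your proposal tries to avoid --- the action of $x^+(u)$ on $\bar M_q(\lambda)$ is expressed as a $\psi,\phi$-dressed combination of $\Omega_\psi(u)$ and $\Omega_\phi(u)$ (with the products $G_l$ of $g$-factors supplying the corrections your Fock-space model omits), and simplicity for $\lambda(h)\neq0$ rests on analyzing the joint kernel of these operators, in tandem with the nondegenerate form of \propref{form} and the simplicity of $\mathcal N_q^-$ as a $\mathcal K_q$-module. A repaired version of your outline would have to replace $\partial_m$ by (a suitable specialization of) $\Omega_\psi(m)$, $\Omega_\phi(m)$ and prove the triviality of their common kernel --- which is no longer a formal Weyl-algebra computation.
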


Consider the set  $\mathfrak h^*_{red}:=\{\lambda\in\mathfrak h^*\,|\, \lambda(c)=0,\lambda (h)\neq 0\}$.
Let $G_q$ be the quantized Heisenberg subalgebra generated by $h_k, k\in \mathbb Z\setminus \{0\}$ and $\gamma$. We say that a nonzero $U_q({\mathfrak g})$-module $V$ is $G_q$-compatible if $V$ has a decomposition $V=TF(V)\oplus T(V)$ into a sum of nonzero $G_q$-submodules such that $G_q$ is bijective on $TF(V)$ (that any nonzero element $g\in G_q$ is a bijection on $TF(V)$) and $TF(V)$ has no nonzero $U_q(\mathfrak g)$-submodule, and $G_q\cdot T(V)=0$.

The category $\mathcal O^q_{\text{red,im}}$ has as objects $U_q({\mathfrak  g})$-modules $M$ such that 
\begin{enumerate}
\item $$
M=\bigoplus_{\nu\in\mathfrak h^*_{red}}M_\nu,\quad\text{ where }\quad M_\nu=\{m\in M\,|\, Km=K^{\nu(h)}m,\enspace Dm=q^{\nu(d)}m\},
$$
\item $x^+_n$, $n\in\mathbb Z$ act locally nilpotently,
\item $M$ is $G_q$-compatible.
\end{enumerate}
The morphisms in this category are the $U_q({\mathfrak g})$-module homomorphisms. Then  for $M \in \mathcal O^q_{\text{red,im}}$
there exists $\lambda_i\in \mathfrak h^*_{red}$, $i\in I$,  with $M\cong \bigoplus_{i\in I}\bar M_q(\lambda_i)$ (\cite{CFM17}, Theorem 6.0.4).

For $M\in\mathcal O^q_{\text{red,im}}$, we can write $M=\oplus_i\bar M_q(\lambda_i)$ with $\bar M_q(\lambda_i)=\oplus \mathbb F(q^{1/2})x^-_{n_1}\cdots x^-_{n_k}v_{\lambda_i}$.  We define $\tilde\Omega_\psi(m)$ and $\tilde x_m^-$ on each $\bar M_q(\lambda_i)$ as in \eqnref{definingomegapsi}: 
\begin{align}
\tilde\Omega_\psi(m)(x^-_{n_1}\cdots x^-_{n_k}v_{\lambda_i})&:=\Omega_\psi(m)(x^-_{n_1}\cdots x^-_{n_k})v_{\lambda_i} \\
\tilde x_m^-(x^-_{n_1} \cdots x^-_{n_k}v_{\lambda_i})&:=  x_m^-x^-_{n_1}\cdots x^-_{n_k}v_{\lambda_i}.
\end{align}
Hence the following result follows.
  \begin{thm}\cite{CFM17} The operators $\tilde\Omega_\psi(m)$ and $\tilde x_m^-$ are well defined on objects in the category $\mathcal O^q_{\text{red,im}}$. Moreover on each summand $\bar M_q(\lambda_i)\cong \mathcal N_q^-$ they agree with the $\Omega_\psi(m)$ respectively left multiplication by $x_m^-$ defined as in \eqnref{definingomegapsi}. 
\end{thm}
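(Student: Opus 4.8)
\section*{Proof proposal}

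The plan is to treat the two operators separately. The operator $\tilde x_m^-$ is simply the action of the algebra element $x_m^-$ on the $U_q(\widehat{\mathfrak g})$-module $M$, so it is manifestly well defined and independent of every choice; all the content is in $\tilde\Omega_\psi(m)$. Here the genuine issue is that the decomposition $M\cong\bigoplus_{i\in I}\bar M_q(\lambda_i)$ from \cite{CFM17} is \emph{not} unique (repeated isotypic components may be split in many ways), so a priori the recipe \eqnref{definingomegapsi} could yield different operators for different decompositions. That the recipe does define an operator on each fixed summand, agreeing there with $\Omega_\psi(m)$, is immediate and gives the ``moreover'' clause: the map $a\mapsto a\,v_{\lambda_i}$ is a vector-space isomorphism $\mathcal N_q^-\xrightarrow{\ \sim\ }\bar M_q(\lambda_i)$, so a vector $x^-_{n_1}\cdots x^-_{n_k}v_{\lambda_i}$ determines the element $x^-_{n_1}\cdots x^-_{n_k}\in\mathcal N_q^-$ uniquely, and applying the already-defined $\Omega_\psi(m)$ and pushing forward is unambiguous. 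The real work is to show that the resulting operator on $M$ depends only on $M$.

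To this end I would give a decomposition-free characterization of $\tilde\Omega_\psi(k)$ via the component form \eqnref{omegapsi6} of \propref{commutatorprop}. Transporting that identity through each isomorphism $\bar M_q(\lambda_i)\cong\mathcal N_q^-$ (which preserves the module action of the $x^-_m$ and the central $\gamma=1$ on reduced modules) shows that, for \emph{any} decomposition, the assembled operator satisfies
\[
\tilde\Omega_\psi(k)(x^-_m w)=\delta_{k,-m}\,w+\sum_{r\ge 0}g_{q^{-1}}(r)\,x^-_{m+r}\,\tilde\Omega_\psi(k-r)(w),\qquad w\in M,
\]
together with the base relation $\tilde\Omega_\psi(k)(v_{\lambda_i})=0$ coming from $\Omega_\psi(u)(1)=0$. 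I then introduce the intrinsic subspace \(M^{\mathrm{sing}}:=\{\,v\in M\mid x^+_n v=0\ \text{for all }n\in\mathbb Z\,\}\), which does not reference any decomposition. The key structural claim is that $M^{\mathrm{sing}}=\bigoplus_i\mathbb F(q^{1/2})\,v_{\lambda_i}$ for every decomposition: since $x^+_n$ preserves summands it suffices to check $(\bar M_q(\lambda_i))^{\mathrm{sing}}=\mathbb F(q^{1/2})v_{\lambda_i}$, and on a reduced module $\gamma=1$ and the Heisenberg generators act by $0$, so $[x^+_m,x^-_n]$ acts as the scalar $\delta_{m+n,0}[\lambda_i(h)]$, nonzero exactly because $\lambda_i(h)\neq0$; grading by the number of $x^-$-factors and running the usual annihilation-operator argument then forces any $x^+$-singular vector into depth zero, i.e.\ into $\mathbb F(q^{1/2})v_{\lambda_i}$. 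Consequently the span of the highest weight vectors of every decomposition is the same intrinsic space $M^{\mathrm{sing}}$, and each assembled $\tilde\Omega_\psi(k)$ annihilates it.

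The conclusion is then a short uniqueness argument. Given two decompositions with assembled operators $\Phi_k$ and $\Psi_k$, both satisfy the displayed recursion and both vanish on $M^{\mathrm{sing}}$, so their difference $\Theta_k:=\Phi_k-\Psi_k$ satisfies the homogeneous recursion (the delta terms cancel) and vanishes on the depth-zero part $M^{\mathrm{sing}}$. Since every depth-$d$ spanning vector has the form $x^-_m w$ with $w$ of depth $d-1$, induction on depth propagates the vanishing of all $\Theta_{k'}$ from depth $\le d-1$ to depth $d$, giving $\Theta_k=0$; thus $\tilde\Omega_\psi(k)$ is independent of the decomposition and is a well-defined operator on $M$. (One could equally characterize $\tilde\Omega_\psi(-m)$ as the adjoint of $\tilde x_m^-$ for the form of \propref{form}, but the recursion is cleanest here.) The step I expect to be the main obstacle is precisely the structural claim $(\bar M_q(\lambda_i))^{\mathrm{sing}}=\mathbb F(q^{1/2})v_{\lambda_i}$: pinning down the base locus of the recursion as an \emph{intrinsic} subspace is where simplicity of $\bar M_q(\lambda_i)$ and the hypothesis $\lambda_i(h)\neq0$ are essential; once that is in hand, the transport of \eqnref{omegapsi6} and the induction on depth are routine.
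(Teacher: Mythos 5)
Your proposal is correct, but it takes a genuinely different and more careful route than the paper. The paper in effect gives no argument: after defining $\tilde\Omega_\psi(m)$ and $\tilde x_m^-$ summand-by-summand on a fixed decomposition $M\cong\bigoplus_{i\in I}\bar M_q(\lambda_i)$, it says only ``Hence the following result follows,'' the point being that each map $a\mapsto a\,v_{\lambda_i}$ is a linear isomorphism $\mathcal N_q^-\to\bar M_q(\lambda_i)$, so defining the operators on the PBW basis and pushing $\Omega_\psi(m)$ forward is unambiguous --- which is exactly your ``moreover'' paragraph, and for the paper that is the whole proof. What you add is a resolution of the issue the paper leaves implicit: the decomposition of \cite{CFM17}, Theorem 6.0.4, is not unique, so a priori $\tilde\Omega_\psi(m)$ could depend on the choice. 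Your intrinsic characterization --- the transported recursion \eqnref{omegapsi6} (with $\gamma=1$ on reduced modules) together with vanishing on the intrinsic subspace $M^{\mathrm{sing}}$, identified with the span of the highest weight vectors via simplicity of the $\bar M_q(\lambda_i)$, i.e.\ $\lambda_i(h)\neq 0$ --- plus induction on the depth filtration generated by $M^{\mathrm{sing}}$, is sound: local finiteness \eqnref{omegalocalfin} makes the sums finite, the inhomogeneous $\delta_{k,-m}$ terms cancel in the difference of the two assembled operators, and every depth-$d$ vector is a span of $x^-_m w$ with $w$ of depth $d-1$ in any decomposition, so the grading you induct on is itself decomposition-free once $M^{\mathrm{sing}}$ is. This buys something concrete: in the paper's proof of \thmref{commute}, the third displayed equality computes $\tilde\Omega_\psi(m)$ on $x^-_{n_1}\cdots x^-_{n_k}\nu(v_\lambda)$ as if $\nu(v_\lambda)$ generated a summand of the chosen decomposition of $M$, which silently invokes exactly the decomposition-independence you prove; your argument removes that latent circularity. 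One small correction: on a reduced module $[x^+_m,x^-_n]$ acts on a weight-$\mu$ vector by the scalar $\delta_{m+n,0}[\mu(h)]$, not $[\lambda_i(h)]$; this is harmless since the category axioms place all weights in $\mathfrak h^*_{red}$, and in any case the simplicity route (a positive-depth singular vector generates a proper submodule by the depth grading, because all $h_k$, $k\neq 0$, act by zero on reduced modules) does not actually need these scalars to be nonzero.
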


\begin{thm}\label{commute}
 The operators $\tilde\Omega_\psi(m)$ and $\tilde x_m^-$ commute with $U_q(\mathfrak g)$-module homomorphisms.
\end{thm}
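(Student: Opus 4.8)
The plan is to exploit the decomposition $M=\bigoplus_i\bar M_q(\lambda_i)$ furnished by \cite{CFM17} together with the simplicity of the summands. Fix a $U_q(\mathfrak g)$-module homomorphism $\phi\colon M\to N$ and choose decompositions $M=\bigoplus_i\bar M_q(\lambda_i)$ and $N=\bigoplus_j\bar M_q(\mu_j)$. Relative to these, $\phi$ is encoded by its components $\phi_{ji}\colon\bar M_q(\lambda_i)\to\bar M_q(\mu_j)$, while both $\tilde\Omega_\psi(m)$ and $\tilde x^-_m$ act block-diagonally, restricting on each summand to $\Omega_\psi(m)$ and to left multiplication by $x^-_m$ under the identification $\bar M_q(\lambda_i)\cong\mathcal N_q^-$ that sends $x^-_{\mathbf n}v_{\lambda_i}$ to $x^-_{\mathbf n}$. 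Writing the desired equality $\phi\circ\tilde\Omega_\psi(m)=\tilde\Omega_\psi(m)\circ\phi$ in block form, it thus suffices to show for every pair $(i,j)$ that $\phi_{ji}$ intertwines the two copies of $\Omega_\psi(m)$, and similarly for $\tilde x^-_m$. For $\tilde x^-_m$ this is immediate: on $M$ the operator $\tilde x^-_m$ coincides with the action of the fixed algebra element $x^-_m\in U_q(\mathfrak g)$, since each summand is a $U_q(\mathfrak g)$-submodule on which $x^-_m$ acts by left multiplication, and any module homomorphism commutes with the action of a fixed algebra element. The content of the theorem therefore lies entirely with $\tilde\Omega_\psi(m)$, which is not the action of any single element of $U_q(\mathfrak g)$.

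To treat $\tilde\Omega_\psi(m)$ I would analyze each component $\phi_{ji}$. Since $\lambda_i,\mu_j\in\mathfrak h^*_{red}$, the modules $\bar M_q(\lambda_i)$ and $\bar M_q(\mu_j)$ are simple, so $\phi_{ji}$ is either zero, in which case there is nothing to check, or an isomorphism. If $\phi_{ji}\neq0$, then $\phi_{ji}(v_{\lambda_i})\neq0$ because $\bar M_q(\lambda_i)$ is generated by $v_{\lambda_i}$; moreover $\phi_{ji}(v_{\lambda_i})$ is a weight vector of weight $\lambda_i$ annihilated by all $x^+_n$, hence a highest weight vector of $\bar M_q(\mu_j)$. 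Using the triangular decomposition of $U_q$ relative to the partition $S$, any such highest weight vector $w$ generates $U_q\cdot w=U_q^-\cdot w$, whose weights all lie below that of $w$ in the $S$-ordering; simplicity of $\bar M_q(\mu_j)$ then forces $w$ to have top weight $\mu_j$, and the top weight space corresponds to $\mathbb F(q^{1/2})\cdot 1\subset\mathcal N_q^-$ and so is one-dimensional. Consequently $\lambda_i=\mu_j$ and $\phi_{ji}(v_{\lambda_i})=c\,v_{\mu_j}$ for some scalar $c\in\mathbb F(q^{1/2})$.

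By $U_q(\mathfrak g)$-equivariance this determines $\phi_{ji}$ completely, namely $\phi_{ji}(x^-_{n_1}\cdots x^-_{n_k}v_{\lambda_i})=c\,x^-_{n_1}\cdots x^-_{n_k}v_{\mu_j}$, so that through the identifications $\bar M_q(\lambda_i)\cong\mathcal N_q^-\cong\bar M_q(\mu_j)$ the map $\phi_{ji}$ becomes multiplication by the scalar $c$ on $\mathcal N_q^-$. Since $\Omega_\psi(m)$ is $\mathbb F(q^{1/2})$-linear it commutes with scalar multiplication, which gives the required block identity; reassembling the blocks yields $\phi\circ\tilde\Omega_\psi(m)=\tilde\Omega_\psi(m)\circ\phi$, and the identical bookkeeping settles $\tilde x^-_m$. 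The main obstacle is the middle step: establishing that a nonzero component between two summands is forced to be a scalar multiple of the canonical isomorphism $\mathcal N_q^-\to\mathcal N_q^-$. Once this highest-weight rigidity is in place, commutation with $\Omega_\psi(m)$ is automatic, precisely because $\Omega_\psi(m)$ is intrinsic to $\mathcal N_q^-$ and hence insensitive to rescaling by $c$.
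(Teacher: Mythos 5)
Your proof is correct, and it rests on the same underlying mechanism as the paper's, but it is organized differently and actually does more work at the one point the paper leaves implicit. The paper reduces immediately to a homomorphism $\nu\colon \bar M_q(\lambda)\to M$ out of a single simple summand and computes in four lines:
$\nu\bigl(\tilde\Omega_\psi(m)(x^-_{n_1}\cdots x^-_{n_k}v_\lambda)\bigr)
=\nu\bigl(\Omega_\psi(m)(x^-_{n_1}\cdots x^-_{n_k})\,v_\lambda\bigr)
=\Omega_\psi(m)(x^-_{n_1}\cdots x^-_{n_k})\,\nu(v_\lambda)
=\tilde\Omega_\psi(m)\bigl(x^-_{n_1}\cdots x^-_{n_k}\,\nu(v_\lambda)\bigr)$,
the key trick being that $\Omega_\psi(m)(x^-_{n_1}\cdots x^-_{n_k})$ is itself an element of $\mathcal N_q^-\subset U_q$ and therefore passes through $\nu$ by equivariance; simplicity is invoked only to say the image of $\nu$ is again a copy of $\bar M_q(\lambda)$ with highest weight vector $\nu(v_\lambda)$. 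Your route instead decomposes both source and target, reduces to components $\phi_{ji}$, and proves a rigidity statement: a nonzero component forces $\lambda_i=\mu_j$ (by comparing the $\alpha_1$-strata of weights, since $\alpha_1$ and $\delta$ are independent and the $k=0$ stratum is one-dimensional) and hence $\phi_{ji}(v_{\lambda_i})=c\,v_{\mu_j}$, so that $\phi_{ji}$ is a scalar in the $\mathcal N_q^-$-coordinates. This rigidity is precisely what the paper's third equality silently assumes, namely that $\tilde\Omega_\psi(m)$ on the target module, defined through a \emph{fixed} direct-sum decomposition, acts on $x^-_{n_1}\cdots x^-_{n_k}\nu(v_\lambda)$ by $\Omega_\psi(m)$ in coordinates relative to $\nu(v_\lambda)$ --- a nontrivial point because the image of $\nu$ need not be one of the chosen summands. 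So your version buys an explicit justification of that compatibility (and of the well-definedness of the operators across decompositions), at the cost of length; the paper's version buys brevity by treating $\Omega_\psi(m)(x^-_{\mathbf n})$ as an algebra element once and for all. Your separate observation that $\tilde x^-_m$ is simply the action of the fixed algebra element $x^-_m$, hence commutes with any module homomorphism, matches the paper's parallel computation for $\tilde x^-_m$.
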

\begin{proof}
It is enough to prove the statement for a $U_q(\mathfrak g)$-module homomorphism $\nu:\bar M_q(\lambda)\to M$ from a highest weight module $\bar M_q(\lambda)$ to any module $M \in \mathcal O_{\text{ref},\text{im}}^q$ for some $\lambda \in \mathfrak h^*_{\text{red}}$.    Now since $\bar M_q(\lambda)$ is simple, its image under $\nu$ is isomorphic to $\bar M_q(\lambda)$.  

Consider now a basis element of $\bar M_q(\lambda)$ of the form $x^-_{n_1}\cdots x^-_{n_k}v_{\lambda}$.  Then
\begin{align*}
\nu\left(\tilde \Omega_\psi(m)(x^-_{n_1}\cdots x^-_{n_k}v_{\lambda})\right)&=\nu\left(\Omega_\psi(m)(x^-_{n_1}\cdots x^-_{n_k})v_{\lambda}\right) \\
&=\Omega_\psi(m)(x^-_{n_1}\cdots x^-_{n_k})\nu\left( v_{\lambda} \right)\\
&=\tilde\Omega_\psi(m)\left(x^-_{n_1}\cdots x^-_{n_k}\nu(v_{\lambda})\right)  \\
&=\tilde\Omega_\psi(m)\nu\left(x^-_{n_1}\cdots x^-_{n_k}v_{\lambda}\right)
\end{align*}
and
\begin{align*}
\nu\left(\tilde x_m^-(x^-_{n_1}\cdots x^-_{n_k})v_{\lambda}\right)&=\nu\left( x_m^-(x^-_{n_1}\cdots x^-_{n_k})v_{\lambda}\right) \\
&= x_m^-(x^-_{n_1}\cdots x^-_{n_k})\nu\left( v_{\lambda} \right)\\
&=\tilde x_m^-\left(x^-_{n_1}\cdots x^-_{n_k}\nu(v_{\lambda})\right) \\
&=\tilde x_m^-\nu\left(x^-_{n_1}\cdots x^-_{n_k}v_{\lambda}\right) .
\end{align*}

\end{proof}

\section{Imaginary crystal lattice and imaginary crystal basis}  
We first recall the definition of imaginary crystal lattice and imaginary crystal basis for modules in category  $\mathcal O^q_{\text{red,im}}$. Let $\mathbb A_0$   to be the ring of rational functions in $q^{1/2}$ with coefficients in a field $\mathbb F$ of characteristic zero, regular at $0$.  
 Let $\mathbb A= \mathbb F[q^{1/2},q^{-1/2}, \frac{1}{[n]_{q}}, n>1]$, and $\pi =\{-k\alpha_1+m\delta\,|\,k>0,m\in\mathbb Z\}\cup\{0\}$.    Let $M\in\mathcal O^q_{\text{red,im}}$. We call a free $\mathbb A_0$-submodule $\mathcal L$ of $M$ an {\it imaginary crystal $\mathbb A_0$-lattice} of $M$ if the following hold
\begin{enumerate}[(i)]
\item $\mathbb F(q^{1/2})\otimes_{\mathbb A_0}\mathcal L\cong M$\label{lattice1},
\item $\mathcal L=\oplus_{\lambda \in \pi}\mathcal L_\lambda$ and $\mathcal L_\lambda =\mathcal L\cap M_\lambda$,
\item $\tilde\Omega_\psi(m)\mathcal L\subseteq \mathcal L$ and $\tilde x^-_m\mathcal L\subseteq \mathcal L$ for all $m\in\mathbb Z$.
\end{enumerate}

An {\it imaginary crystal basis} of a $U_q({\mathfrak g})$-module $M \in \mathcal O^q_{\text{red,im}}$ is a pair $(\mathcal L,\mathcal B)$  satisfying
\begin{enumerate}
\item $\mathcal L$ is an imaginary crystal lattice of $M$,
\item $\mathcal B$ is an $\mathbb F$-basis of $\mathcal L/q\mathcal L\cong \mathbb F\otimes_{\mathcal A_0}\mathcal L$,
\item $\mathcal B=\sqcup_{\mu \in \pi}\mathcal B_\mu$, where $\mathcal B_\mu =\mathcal B\cap (\mathcal L_\mu/q\mathcal L_\mu)$, 
\item $\tilde x_m^-\mathcal B\subset\pm \mathcal B\cup\{0\}$ and $\tilde \Omega_\psi(m)\mathcal B\subset  \pm\mathcal B\cup \{0\}$,
\item For $m\in\mathbb Z$, if $ \tilde \Omega_\psi(-m)b\neq 0$ and $\tilde x_m^-b\neq0$ for $b\in \mathcal B$, then $\tilde x_m^-\tilde \Omega_\psi(-m)b=\tilde \Omega_\psi(-m)\tilde x_m^-b$.  .
\end{enumerate}

 For $\lambda\in \mathfrak h^*_{red}$,  $\bar M_q(\lambda) \in \mathcal O^q_{\text{red,im}}$, define
\begin{align*}
\mathcal L(\lambda):&=\sum_{k\geq 0\atop  i_1\geq \dots\geq i_k, i_j\in\mathbb Z}\mathbb A_0x^-_{i_1}\cdots x^-_{i_k}v_\lambda\subset \mathcal N_q^-v_\lambda=\bar M_q(\lambda).
\end{align*}

Then  $\mathcal L(\lambda)$ is a imaginary crystal $\mathbb A_0$-lattice \cite{CFM17} since
\begin{enumerate}[(i)]
\item $\mathbb F(q^{1/2})\otimes_{\mathbb A_0}\mathcal L(\lambda)\cong \bar M_q(\lambda)$\label{lattice1},
\item $\mathcal L(\lambda)=\oplus_{\mu \in \pi}\mathcal L(\lambda)_\mu$ where $\mathcal L(\lambda)_\mu =\mathcal L(\lambda)\cap  \bar M_q(\lambda)_\mu$,
\item $\tilde\Omega_\psi(m)\mathcal L(\lambda)\subseteq \mathcal L(\lambda)$ and $\tilde x^-_m\mathcal L(\lambda)\subseteq \mathcal L(\lambda)$ for all $m\in\mathbb Z$.
\end{enumerate}

\begin{prop}  \cite{CFM17} For $\lambda\in \mathfrak h^*_{red}$ we have
$$
\mathcal L(\lambda)=\left\{u\in \bar M_q(\lambda)\,|\, (u,\bar M_q(\lambda))\subset \mathbb A_0\right\}.
$$
\end{prop}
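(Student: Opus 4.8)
The plan is to realize $\mathcal L(\lambda)$ as a free $\mathbb A_0$-lattice on the ordered PBW monomials and to prove it is self-dual for the form of \propref{form} and \propref{symmetricform}. Write $b_{\mathbf m}=x_{\mathbf m}v_\lambda=x^-_{m_1}\cdots x^-_{m_n}v_\lambda$ for ordered multi-indices $\mathbf m=(m_1,\dots,m_n)$, $m_1\ge\cdots\ge m_n$; these form an $\mathbb F(q^{1/2})$-basis of $\bar M_q(\lambda)\cong\mathcal N_q^-$ and, by definition, an $\mathbb A_0$-basis of $\mathcal L(\lambda)$. Denote by $\mathcal L(\lambda)^\vee$ the right-hand side of the asserted identity, read as the set of $u$ with $(u,b_{\mathbf k})\in\mathbb A_0$ for every ordered $\mathbf k$ (equivalently $(u,\mathcal L(\lambda))\subseteq\mathbb A_0$). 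Two structural inputs will be used throughout: (a) by the contravariance $(x^-_m a,b)=(a,\Omega_\psi(-m)b)$ of \propref{form} the form is homogeneous for the $P$-grading, so $(b_{\mathbf m},b_{\mathbf k})=0$ unless $b_{\mathbf m}$ and $b_{\mathbf k}$ lie in the same weight space (in particular $|\mathbf m|=|\mathbf k|$ and $\sum m_i=\sum k_i$); and (b) within one weight space \propref{symmetricform} gives $(b_{\mathbf m},b_{\mathbf k})\equiv\delta_{\mathbf m,\mathbf k}\bmod q^2\mathbb Z[\![q]\!]$, so every form value is regular at $0$ and hence lies in $\mathbb A_0$.

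The inclusion $\mathcal L(\lambda)\subseteq\mathcal L(\lambda)^\vee$ is immediate. For $u=\sum_{\mathbf m}c_{\mathbf m}b_{\mathbf m}$ with $c_{\mathbf m}\in\mathbb A_0$ (finitely many nonzero) and any ordered $\mathbf k$,
\[
(u,b_{\mathbf k})=\sum_{\mathbf m}c_{\mathbf m}(b_{\mathbf m},b_{\mathbf k})
\]
is a finite $\mathbb A_0$-linear combination of form values lying in $\mathbb A_0$ by (b), hence is in $\mathbb A_0$.

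The reverse inclusion is the substance. Take $u\in\mathcal L(\lambda)^\vee$ and decompose it over weight spaces, $u=\sum_\mu u_\mu$ (a finite sum). By homogeneity (a), $(u_\mu,b_{\mathbf k})=(u,b_{\mathbf k})\in\mathbb A_0$ whenever $b_{\mathbf k}$ has weight $\mu$, so it suffices to treat a single homogeneous $u_\mu=\sum_{\mathbf m\in S}c_{\mathbf m}b_{\mathbf m}$, a finite sum over ordered indices $S$ of fixed weight $\mu$. Testing against the finitely many $b_{\mathbf k}$, $\mathbf k\in S$, yields the linear system $G_S\,c=d$, where $G_S=\big((b_{\mathbf m},b_{\mathbf k})\big)_{\mathbf m,\mathbf k\in S}$, $c=(c_{\mathbf m})$, and $d=\big((u_\mu,b_{\mathbf k})\big)_{\mathbf k\in S}\in\mathbb A_0^{S}$. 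By (b), $G_S\equiv I\bmod q^2\mathbb Z[\![q]\!]$, so $\det G_S\equiv 1\bmod q^2$; in particular $\det G_S|_{q=0}=1\ne0$, making $\det G_S$ a unit in the local ring $\mathbb A_0$. Hence $G_S^{-1}$ has entries in $\mathbb A_0$ (Cramer's rule: the cofactors lie in $\mathbb A_0$ and the determinant is invertible there), and $c=G_S^{-1}d\in\mathbb A_0^{S}$. Thus each $c_{\mathbf m}\in\mathbb A_0$, i.e.\ $u_\mu\in\mathcal L(\lambda)$, and summing over $\mu$ gives $u\in\mathcal L(\lambda)$.

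The one genuinely delicate point is the invertibility of the Gram block $G_S$ over $\mathbb A_0$, which is exactly where \propref{symmetricform} is indispensable: the normalization $(b_{\mathbf m},b_{\mathbf k})\equiv\delta_{\mathbf m,\mathbf k}\bmod q^2$ forces $\det G_S$ to have nonzero constant term, so it is a unit in the local ring $\mathbb A_0$. The remaining care is in homogeneity claim (a): the length grading is already contained in \propref{symmetricform}, while the vanishing for unequal $\delta$-degrees follows because $x^-_m$ and $\Omega_\psi(m)$ are homogeneous for the $D$-grading, so the contravariant form of \propref{form} pairs each weight space of $\bar M_q(\lambda)$ only with itself. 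A useful feature of the argument is that, because $u$ has finite support, one never inverts the full (infinite) Gram matrix of a weight space: the finite block $G_S$ indexed by the support of $u$ suffices, which is what sidesteps the infinite-dimensionality of the weight spaces of $\bar M_q(\lambda)$.
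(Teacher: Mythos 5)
This paper states the proposition without proof, citing \cite{CFM17}, and your argument is essentially the proof given there (and the standard Kashiwara-style one): the almost-orthonormality of the ordered PBW monomials from \propref{symmetricform}, together with weight-homogeneity of the form from \propref{form}, makes every finite Gram block congruent to the identity modulo $q^2$, hence invertible over the local ring $\mathbb A_0$, which gives both inclusions exactly as you argue. Your reading of the right-hand side as $\left\{u \,:\, (u,\mathcal L(\lambda))\subset \mathbb A_0\right\}$ is also the intended one, since the literal condition $(u,\bar M_q(\lambda))\subset\mathbb A_0$ would force $u=0$ by $\mathbb F(q^{1/2})$-bilinearity of the form.
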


For $\lambda \in\mathfrak h^*$ define 
$$
\mathcal B(\lambda):=\left\{\tilde x_{i_1}^-\cdots \tilde x_{i_k}^-v_\lambda +q\mathcal L(\lambda)\in \mathcal L(\lambda)/q\mathcal L(\lambda)\,|\, i_1\geq \cdots \geq  i_k\right\}.
$$

\begin{thm} \cite{CFM17} For $\lambda\in {\mathfrak h}^*_{red}$, the pair $(\mathcal L(\lambda),\mathcal B(\lambda))$ is an imaginary crystal basis of the reduced imaginary Verma module $\bar M_q(\lambda)$.  
\end{thm}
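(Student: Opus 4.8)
The plan is to verify the five defining conditions of an imaginary crystal basis for the pair $(\mathcal L(\lambda),\mathcal B(\lambda))$, using the bilinear form of \propref{form} and the near-orthonormality of the ordered PBW monomials from \propref{symmetricform} as the principal tools (recall that on $\bar M_q(\lambda)$, $\gamma$ acts as $1$ since $\lambda(c)=0$). Conditions (1)--(3) are essentially formal. Condition (1) is already recorded: $\mathcal L(\lambda)$ is an imaginary crystal lattice. For condition (2), the ordered monomials $x^-_{i_1}\cdots x^-_{i_k}v_\lambda$ with $i_1\ge\cdots\ge i_k$ span $\mathcal L(\lambda)$ by definition, and on any finite set of them their Gram matrix is $\equiv I \bmod q^2$ by \propref{symmetricform}, hence invertible over $\mathbb A_0$; thus $\mathcal L(\lambda)$ is $\mathbb A_0$-free on these monomials and $\mathcal B(\lambda)$ is an $\mathbb F$-basis of $\mathcal L(\lambda)/q\mathcal L(\lambda)$. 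Condition (3) is immediate, since each such monomial is a weight vector and $\mathcal L(\lambda)=\oplus_\mu\mathcal L(\lambda)_\mu$.

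The heart of the argument is condition (4), and I would reduce everything to a single statement: at $q=0$ the operator $\tilde x^-_m$ acts on $\mathcal B(\lambda)$ as a signed partial permutation, that is, each basis vector goes to $0$ or to $\pm$ a basis vector, and the map is injective where it is nonzero. To prove this I would reduce $x^-_mx^-_{i_1}\cdots x^-_{i_k}$ to PBW form over $\mathbb A_0$ and then pass to $q=0$. Modulo $q$ the reordering relations \eqnref{Serre1}, \eqnref{Serre2} collapse to the single-term rules $x^-_ax^-_{a+1}\equiv 0$ and $x^-_ax^-_b\equiv -x^-_{b-1}x^-_{a+1}$ for $b\ge a+2$ (using $g(0)=q^2\equiv 0$ from \eqnref{grcomp}). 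One then checks that inserting $x^-_m$ into the ordered word is a deterministic process: $x^-_m$ bubbles to the right, each factor it passes being decremented by one and the inserted index being incremented, until either an adjacent pair $x^-_ax^-_{a+1}$ appears (giving $0$) or the inserted index comes to rest, yielding a single ordered monomial with sign $(-1)^p$, where $p$ is the number of factors passed. Injectivity follows because, given the image monomial $x^-_{c_1}\cdots x^-_{c_{k+1}}$ and the index $m$, the inserted slot is the unique position $j$ with $c_j=m+(j-1)$: the quantity $c_j-m-(j-1)$ is strictly decreasing in $j$, so it vanishes for at most one $j$, and from that slot the original word is recovered uniquely.

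Once $\tilde x^-_m$ is known to be a signed partial permutation $S$ at $q=0$, the remaining conditions follow cleanly from duality. By \propref{form} the operator $\tilde\Omega_\psi(-m)$ is the adjoint of $\tilde x^-_m$ with respect to $(\,,\,)$, and by \propref{symmetricform} the induced form on $\mathcal L(\lambda)/q\mathcal L(\lambda)$ makes $\mathcal B(\lambda)$ orthonormal; hence $\tilde\Omega_\psi(-m)=S^{\ast}$ is again a signed partial permutation, which completes condition (4) for the $\tilde\Omega_\psi$'s as well (replacing $m$ by $-m$). For condition (5) I would use that a signed partial permutation of an orthonormal basis is a partial isometry: $S^{\ast}S$ is the orthogonal projection onto $\mathrm{span}(b_i:\tilde x^-_mb_i\ne 0)$ and $SS^{\ast}$ the projection onto $\mathrm{span}(b_i:\tilde\Omega_\psi(-m)b_i\ne 0)$. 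Thus if $b\in\mathcal B(\lambda)$ satisfies both $\tilde\Omega_\psi(-m)b\ne 0$ and $\tilde x^-_mb\ne 0$, then $b$ lies in the range of both projections, whence $\tilde x^-_m\tilde\Omega_\psi(-m)b=SS^{\ast}b=b=S^{\ast}Sb=\tilde\Omega_\psi(-m)\tilde x^-_mb$, which is exactly condition (5).

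The main obstacle I anticipate is making the $q=0$ reordering in condition (4) fully rigorous: one must confirm that the single-term rules mod $q$ are confluent and terminating, so that the bubble process is well defined and genuinely yields $0$ or one monomial (not merely that the answer over $\mathbb A_0$ reduces to this modulo $q$), and one must track the sign and verify that the output stays ordered at each stage. Everything afterward --- condition (4) for $\tilde\Omega_\psi$ and condition (5) --- is formal once the partial-isometry package is in place.
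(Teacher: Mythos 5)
Your proposal is essentially correct, but it takes a genuinely different route from the source for half of the work. Note first that the present paper only states this theorem with a citation to \cite{CFM17} and does not reproduce the proof; the machinery it recalls for that proof---in particular the componentwise relation \eqnref{omegapsi6} together with the expansion \eqnref{grcomp} of the coefficients $g(r)$---signals how the original argument runs: besides the $x^-$-reordering computation that you also perform (your mod-$q$ rules $x^-_ax^-_{a+1}\equiv 0$ and $x^-_ax^-_b\equiv -x^-_{b-1}x^-_{a+1}$ for $b\geq a+2$ are exactly \eqnref{Serre1} and \eqnref{Serre2} reduced modulo $q^2$), the action of $\tilde\Omega_\psi(m)$ on $\mathcal B(\lambda)$ is there computed \emph{directly}: since modulo $q^2$ one has $g_{q^{-1}}(0)\equiv 0$, $g_{q^{-1}}(1)\equiv -1$, $g_{q^{-1}}(r)\equiv 0$ for $r\geq 2$, and $\gamma=1$ on $\bar M_q(\lambda)$, relation \eqnref{omegapsi6} collapses to a second deterministic bubbling $\Omega_\psi(k)x^-_m\equiv \delta_{k,-m}-x^-_{m+1}\Omega_\psi(k-1)$, and condition (5) is then verified by comparing the two explicit actions on monomials. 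You instead derive the entire $\Omega_\psi$-half of condition (4), and condition (5), from the single $x^-$-computation via duality: adjointness from \propref{form} plus near-orthonormality from \propref{symmetricform} make $\tilde x^-_m$ at $q=0$ a signed partial permutation $S$ (your injectivity argument via the strictly decreasing quantity $c_j-m-(j-1)$ is the key extra ingredient, and it is sound), whence $\tilde\Omega_\psi(-m)$ reduces to $S^{T}$ and the partial-isometry identities for $S^{T}S$ and $SS^{T}$ give (5) for free. Your route buys real economy---one hard computation instead of two, with (5) formal---while the direct computation of \cite{CFM17} buys an explicit description of the $\Omega_\psi$-action on the crystal, which is useful in its own right. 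Two points in your argument should be made explicit: orthogonality of monomials of equal length but different $\delta$-degree is not literally covered by \propref{symmetricform} (it follows by induction from the adjunction and $\Omega_\psi(u)(1)=0$, using $(1,1)=1$); and the rigor point you flagged is easier than full confluence: since the ordered monomials form a PBW basis, the final expansion is unique whatever rewriting strategy is used, so you need only termination (asserted after \eqnref{Serre2}) together with the observation that every correction term in \eqnref{Serre1} and \eqnref{Serre2} carries a coefficient in $q^2\mathbb Z[q^2]$ and hence stays in $q^2\mathcal L(\lambda)$ under further rewriting.
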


Now suppose $M \in \mathcal O^q_{\text{red,im}}$. Then there exists $\lambda_i\in \mathfrak h^*_{red}$, $i\in I$,  with $M\cong \bigoplus_{i\in I}\bar M_q(\lambda_i)$. Let $(\mathcal L(\lambda_i),\mathcal B(\lambda_i))$ be the imaginary crystal basis for $\bar M_q(\lambda_i)$
for $i \in I$. Set $\mathcal L = \bigoplus_{i\in I}\mathcal L(\lambda_i)$ and  $\mathcal B = \bigsqcup_{i \in I}\mathcal B(\lambda_i)$.

\begin{thm} The pair $(\mathcal L,\mathcal B)$ is an imaginary crystal basis for $M \in \mathcal O^q_{\text{red,im}}$.  
\end{thm}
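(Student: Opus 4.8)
The plan is to verify the defining conditions of an imaginary crystal basis for the pair $(\mathcal L,\mathcal B)$ one at a time, in each case reducing the statement to the corresponding property of the summands $(\mathcal L(\lambda_i),\mathcal B(\lambda_i))$, which are already known to be imaginary crystal bases. The structural fact that makes this reduction legitimate is that the operators $\tilde\Omega_\psi(m)$ and $\tilde x^-_m$ act block-diagonally with respect to the decomposition $M\cong\bigoplus_{i\in I}\bar M_q(\lambda_i)$: since the projection of $M$ onto each summand $\bar M_q(\lambda_i)$ is a $U_q(\mathfrak g)$-module homomorphism, \thmref{commute} shows that $\tilde\Omega_\psi(m)$ and $\tilde x^-_m$ commute with it, hence preserve $\bar M_q(\lambda_i)$ and restrict there to $\Omega_\psi(m)$ and left multiplication by $x^-_m$. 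I would equip $M$ with the $\pi$-grading transported from the summands through the fixed isomorphism, so that $M_\mu=\bigoplus_{i\in I}\bar M_q(\lambda_i)_\mu$ for $\mu\in\pi$.

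First I would confirm that $\mathcal L=\bigoplus_{i\in I}\mathcal L(\lambda_i)$ is an imaginary crystal $\mathbb A_0$-lattice of $M$. Condition (i) follows because $\mathbb F(q^{1/2})\otimes_{\mathbb A_0}(-)$ commutes with arbitrary direct sums, together with $\mathbb F(q^{1/2})\otimes_{\mathbb A_0}\mathcal L(\lambda_i)\cong\bar M_q(\lambda_i)$. For (ii), the $\pi$-grading of $\mathcal L$ is the direct sum of the gradings $\mathcal L(\lambda_i)=\bigoplus_{\mu\in\pi}\mathcal L(\lambda_i)_\mu$, and since the isomorphism $M\cong\bigoplus_i\bar M_q(\lambda_i)$ respects weight spaces one identifies $\mathcal L\cap M_\mu$ with $\bigoplus_i\bigl(\mathcal L(\lambda_i)\cap\bar M_q(\lambda_i)_\mu\bigr)$, giving $\mathcal L=\bigoplus_{\mu\in\pi}\mathcal L_\mu$. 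Condition (iii), the invariance of $\mathcal L$ under $\tilde\Omega_\psi(m)$ and $\tilde x^-_m$, is then immediate from block-diagonality and the already-established invariance of each $\mathcal L(\lambda_i)$.

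Next I would treat the four conditions on $\mathcal B=\bigsqcup_{i\in I}\mathcal B(\lambda_i)$. Because $q$ acts diagonally, reduction modulo $q$ commutes with the direct sum, so $\mathcal L/q\mathcal L\cong\bigoplus_i\mathcal L(\lambda_i)/q\mathcal L(\lambda_i)$; as vectors lying in distinct summands are automatically linearly independent, the disjoint union of the bases $\mathcal B(\lambda_i)$ is an $\mathbb F$-basis of $\mathcal L/q\mathcal L$, which is condition (2), and condition (3) follows by collecting the $\pi$-graded pieces. For condition (4), block-diagonality gives $\tilde x^-_m\mathcal B(\lambda_i)\subset\pm\mathcal B(\lambda_i)\cup\{0\}\subset\pm\mathcal B\cup\{0\}$ and likewise for $\tilde\Omega_\psi(m)$, so the required containments hold for all of $\mathcal B$. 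Condition (5), the commutation $\tilde x^-_m\tilde\Omega_\psi(-m)b=\tilde\Omega_\psi(-m)\tilde x^-_m b$ whenever both sides are nonzero, holds for each $b\in\mathcal B(\lambda_i)$ by the crystal-basis property of $\bar M_q(\lambda_i)$, and since every $b\in\mathcal B$ belongs to a single summand it holds throughout $\mathcal B$.

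The step I expect to require the most care is the block-diagonality of $\tilde\Omega_\psi(m)$ and $\tilde x^-_m$ and the attendant choice of $\pi$-grading on $M$: one must check that these operators, defined summand-by-summand, genuinely preserve each submodule $\bar M_q(\lambda_i)$, and that the weight grading used in conditions (ii)--(iii) and (3) is the one transported from the summands. Once this is settled by \thmref{commute}, every remaining verification is a formal consequence of the compatibility of direct sums with base change to $\mathbb F(q^{1/2})$, with reduction modulo $q$, and with the $\pi$-grading, and no fresh computation with the $\Omega$-operators is needed beyond the summand case.
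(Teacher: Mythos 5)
Your proposal is correct and follows essentially the same route as the paper: verify the lattice conditions (i)--(iii) and the basis conditions (2)--(5) summand by summand, using that base change to $\mathbb F(q^{1/2})$, reduction modulo $q$, and the $\pi$-grading all commute with the direct sum $M\cong\bigoplus_{i\in I}\bar M_q(\lambda_i)$. The one cosmetic difference is that you derive block-diagonality of $\tilde\Omega_\psi(m)$ and $\tilde x^-_m$ from Theorem~\ref{commute} via the projections, whereas in the paper these operators are \emph{defined} summand-by-summand (so block-diagonality is automatic there, and the projection argument is reserved for the partial converse); this changes nothing of substance.
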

\begin{proof}

First let us see that $\mathcal L$ is an imaginary crystal lattice:
\begin{enumerate}[(i)]
\item $\mathbb F(q^{1/2})\otimes_{\mathbb A_0}\mathcal L\cong \oplus_{i\in I}\mathbb F(q^{1/2})\otimes_{\mathbb A_0}\mathcal L(\lambda_i)\cong \oplus_{i\in I}\bar M_q(\lambda_i) =M$,
\item First we show that $\mathcal L_\mu =\left(\oplus_{i\in I}\mathcal L(\lambda_i)\right)_\mu=\oplus_{i\in I}\mathcal L(\lambda_i)_\mu$
where $\mathcal L(\lambda_i)_\mu=\mathcal L(\lambda_i)\cap M_\mu$.
This follows because if $u\in \left(\oplus_{i\in I}\mathcal L(\lambda_i)\right)_\mu$, then 
$$
u=\sum_{i\in I}u_i
$$
with $u_i\in \mathcal L(\lambda_i)$ and $Ku= q^{\mu (h)}u$.  Moreover since $\mathcal L(\lambda_i) =\oplus_{\mu_i\in \pi}
\mathcal L(\lambda_i)_{\mu_i}$,
$$
u_i=\sum_{\mu_i\in\pi}u_{i,\mu_i}
$$
with $u_{i,\mu_i}\in \mathcal L(\lambda_i)$ and 
$$
\sum_{i\in I}\sum_{\mu_i\in\pi}q^{\mu(h)}u_{i,\mu_i}=\sum_{i\in I}q^{\mu(h)}u_i=q^{\mu(h)}u=Ku=\sum_{i\in I}Ku_i
$$
$$
=\sum_{i\in I}\sum_{\mu_i\in\pi}Ku_{i,\mu_i}= \sum_{i\in I}\sum_{\mu_i\in\pi}q^{\mu_i(h)}u_{i,\mu_i}. 
$$
Now since the sum $\oplus_{i\in I}\mathcal L(\lambda_i)$ is direct, the above gives us 
$$
 \sum_{\mu_i\in\pi}\left(q^{\mu(h)}-q^{\mu_i(h)}\right)u_{i,\mu_i} =0
$$
for each $i$.  Since $\mathcal L(\lambda_i)=\oplus_{\mu_i\in \pi}\mathcal L(\lambda_i)_{\mu_i}$ is a direct sum we have $\mu=\mu_i$ for all $i$.   Thus $u_{i,\mu_i}\in \mathcal L(\lambda_i)\cap M_\mu=\mathcal L(\lambda_i)_\mu$. 
Finally we have 
$\mathcal L=\oplus_{i\in I}\mathcal L(\lambda_i)=\oplus_{i\in I}\oplus_{\mu \in \pi}\mathcal L(\lambda_i)_\mu=\oplus_{\mu \in \pi}\oplus_{i\in I}\mathcal L(\lambda_i)_\mu=\oplus_{\mu \in \pi}\mathcal L_\mu$ and $\mathcal L_\mu =\mathcal L\cap M_\mu$.

\item $\tilde\Omega_\psi(m)\mathcal L=\tilde\Omega_\psi(m)\left(\oplus_{i\in I}\mathcal L(\lambda_i)\right)=\left(\oplus_{i\in I} \tilde\Omega_\psi(m)\mathcal L(\lambda_i)\right)\subseteq \oplus_{i\in I}\mathcal L(\lambda_i)=\mathcal L$ and $\tilde x^-_m\mathcal L= x^-_m\left(\oplus_{i\in I}\mathcal L(\lambda_i)\right)= \left(\oplus_{i\in I}\tilde x^-_m\mathcal L(\lambda_i)\right) \subseteq \oplus_{i\in I}\mathcal L(\lambda_i)=\mathcal L$ for all $m\in\mathbb Z$.
\end{enumerate}

This proves that $\mathcal L$ is an imaginary crystal lattice. 

We know that $\mathcal B(\lambda_i)$ is a $\mathbb F$-basis of $\mathcal L(\lambda_i)/q\mathcal L(\lambda_i) =\mathbb F\otimes  _{\mathcal A_0}\mathcal L(\lambda_i)$ for each $i\in I$.
Now 
\begin{align*}
\mathcal L/q\mathcal L&=\left(\oplus_{i\in I}\mathcal L(\lambda_i)\right)/q\left(\oplus_{i\in I}\mathcal L(\lambda_i)\right) \\
&\cong  \oplus_{i\in I}\left(\mathcal L(\lambda_i)/q\mathcal L(\lambda_i)\right) \cong \oplus_{i\in I}\left(\mathbb F\otimes_{\mathcal A_0} \mathcal L(\lambda_i)\right)\
\cong \mathbb F\otimes_{\mathcal A_0} \mathcal L
\end{align*}
since $\mathcal L=\oplus_{i\in I}\mathcal L(\lambda_i)$.
Hence $\mathcal L/q\mathcal L$ has the $\mathbb F$-basis $\sqcup_{i\in I}\mathcal B(\lambda_i)=\mathcal B$.

For (3) we have 
$$
\mathcal B=\sqcup_{i\in I} \mathcal B(\lambda_i)=\sqcup_{i\in I}\sqcup_{\mu \in \pi} \mathcal B(\lambda_i)_\mu=\sqcup_{\mu \in \pi} \sqcup_{i\in I}\mathcal B(\lambda_i)_\mu=\sqcup_{\mu \in \pi} \mathcal B_\mu
$$
where $\mathcal B_\mu:=\sqcup_{i\in I}\mathcal B(\lambda_i)_\mu$ and 
\begin{align*}
\mathcal B_\mu&=\sqcup_{i\in I}\left(\mathcal B(\lambda_i)\cap(\mathcal L(\lambda_i)_\mu /q\mathcal L(\lambda_i)_\mu\right) \\
&=\sqcup_{i\in I}\mathcal B(\lambda_i)\cap\left(\oplus_{j \in I}(\mathcal L(\lambda_j)_\mu /q\mathcal L(\lambda_j)_\mu)\right)\\
&=\left(\sqcup_{j\in I}\mathcal B(\lambda_i)\right)\cap\mathcal L_\mu /q\mathcal L_\mu \\
&= \mathcal B \cap\mathcal L_\mu /q\mathcal L_\mu.
\end{align*}

For (4) one notes that  $\tilde x_m^-\mathcal B=\sqcup_{i\in I}\tilde x_m^-\mathcal B(\lambda_i)\subseteq \pm \sqcup_{i\in I} \mathcal B(\lambda_i)\cup \{0\}\subset\pm \mathcal B\cup\{0\}$ and similarly $\tilde \Omega_\psi(m)\mathcal B\subset  \pm\mathcal B\cup \{0\}$,

For (5) we take $b\in \mathcal B$ and suppose $ \tilde \Omega_\psi(-m)b\neq 0$ and $\tilde x_m^-b\neq0$.  Now $b\in \mathcal B(\lambda_i)$ for some $i \in I$. Since $(\mathcal L(\lambda_i),\mathcal B(\lambda_i))$ is an imaginary crystal basis, we have $\tilde x_m^-\tilde \Omega_\psi(-m)b=\tilde \Omega_\psi(-m)\tilde x_m^-b$.

\end{proof}

We also have the following partial converse.
\begin{thm}
Let $M=M_1\oplus M_2$ where $M_1$ and $M_2$ are modules in the category $\mathcal O_{\text{red},\text{im}}^q$ and suppose $(\mathcal L,\mathcal B)$ is an imaginary crystal basis for $M$.  Furthermore, suppose that there exists $\mathcal A_0$-submodules $\mathcal L_j\subset M_j$, and subsets $\mathcal B_j\subset \mathcal L_j/q\mathcal L_j, \, j =1,2$ such that $\mathcal L=\mathcal L_1\oplus \mathcal L_2$ and $\mathcal B=\mathcal B_1\sqcup \mathcal B_2$.  Then $(\mathcal L_j,\mathcal B_j)$ is an imaginary crystal basis of $M_j, \, j=1,2$. 
\end{thm}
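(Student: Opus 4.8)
The plan is to verify the five defining conditions of an imaginary crystal basis for each pair $(\mathcal L_j,\mathcal B_j)$ directly, exploiting the fact that $M=M_1\oplus M_2$ is a \emph{direct sum decomposition in the category} $\mathcal O^q_{\text{red,im}}$ together with the compatibility hypotheses $\mathcal L=\mathcal L_1\oplus \mathcal L_2$ and $\mathcal B=\mathcal B_1\sqcup \mathcal B_2$. The essential point is that the operators $\tilde\Omega_\psi(m)$ and $\tilde x_m^-$, being defined summand-by-summand (and, by \thmref{commute}, commuting with module homomorphisms), respect the decomposition $M=M_1\oplus M_2$; in particular they preserve each $M_j$ and hence each $\mathcal L_j$. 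This is what makes the converse work: the operators do not mix the two summands, so every structural property of $(\mathcal L,\mathcal B)$ restricts cleanly to $(\mathcal L_j,\mathcal B_j)$.

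First I would establish the lattice axioms for $\mathcal L_j$. For axiom (i), applying $\mathbb F(q^{1/2})\otimes_{\mathbb A_0}(-)$ to $\mathcal L=\mathcal L_1\oplus\mathcal L_2$ and comparing with $M=M_1\oplus M_2$ shows $\mathbb F(q^{1/2})\otimes_{\mathbb A_0}\mathcal L_j\cong M_j$, using that tensor commutes with direct sums and that the inclusion $\mathcal L_j\subset M_j$ is given. For axiom (ii), one runs the same weight-space argument as in the forward theorem: since $\mathcal L=\oplus_{\mu\in\pi}\mathcal L_\mu$ and the decomposition $\mathcal L=\mathcal L_1\oplus\mathcal L_2$ is compatible with the $M_j$, the intersection $\mathcal L_j\cap M_{j,\mu}$ gives the grading $\mathcal L_j=\oplus_{\mu\in\pi}(\mathcal L_j)_\mu$ with $(\mathcal L_j)_\mu=\mathcal L_j\cap M_{j,\mu}$. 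For axiom (iii), stability under $\tilde\Omega_\psi(m)$ and $\tilde x_m^-$ follows because these operators preserve $M_j$ and preserve $\mathcal L$; writing $\tilde\Omega_\psi(m)\mathcal L=\tilde\Omega_\psi(m)\mathcal L_1\oplus\tilde\Omega_\psi(m)\mathcal L_2\subseteq\mathcal L_1\oplus\mathcal L_2$ and projecting onto the $j$-th summand forces $\tilde\Omega_\psi(m)\mathcal L_j\subseteq\mathcal L_j$, and similarly for $\tilde x_m^-$.

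Next I would verify the basis axioms (2)--(5). For (2), from $\mathcal L=\mathcal L_1\oplus\mathcal L_2$ one gets $\mathcal L/q\mathcal L\cong(\mathcal L_1/q\mathcal L_1)\oplus(\mathcal L_2/q\mathcal L_2)$, and since $\mathcal B=\mathcal B_1\sqcup\mathcal B_2$ is an $\mathbb F$-basis of the left side with $\mathcal B_j\subset\mathcal L_j/q\mathcal L_j$, each $\mathcal B_j$ must be an $\mathbb F$-basis of $\mathcal L_j/q\mathcal L_j$. Axiom (3), the $\pi$-grading of $\mathcal B_j$, follows by setting $(\mathcal B_j)_\mu:=\mathcal B_j\cap((\mathcal L_j)_\mu/q(\mathcal L_j)_\mu)$ and intersecting the already-established grading $\mathcal B=\sqcup_\mu\mathcal B_\mu$ with the $j$-th summand, exactly reversing the computation in the forward theorem. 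For axiom (4), one uses that the induced operators on $\mathcal L/q\mathcal L$ send $\mathcal B$ into $\pm\mathcal B\cup\{0\}$ and preserve each summand $\mathcal L_j/q\mathcal L_j$; hence for $b\in\mathcal B_j$ the element $\tilde x_m^- b$ lies in $(\pm\mathcal B\cup\{0\})\cap(\mathcal L_j/q\mathcal L_j)=\pm\mathcal B_j\cup\{0\}$, and likewise for $\tilde\Omega_\psi(m)$. Axiom (5) is immediate: for $b\in\mathcal B_j$ the commutation $\tilde x_m^-\tilde\Omega_\psi(-m)b=\tilde\Omega_\psi(-m)\tilde x_m^- b$ is inherited verbatim from the same identity for $(\mathcal L,\mathcal B)$, since both sides already live in $M_j$.

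The step I expect to require the most care is axiom (4), specifically justifying that the operators on the quotient $\mathcal L/q\mathcal L$ preserve each summand $\mathcal L_j/q\mathcal L_j$ and that the sign $\pm$ is resolved within a single summand rather than mixing $\mathcal B_1$ and $\mathcal B_2$. This rests on the operators being defined compatibly with the direct-sum decomposition at the level of $M$ (they act diagonally because they commute with the projections $M\to M_j$, which are $U_q(\mathfrak g)$-homomorphisms, by \thmref{commute}), so that the induced maps on $\mathcal L/q\mathcal L$ are block-diagonal with respect to $(\mathcal L_1/q\mathcal L_1)\oplus(\mathcal L_2/q\mathcal L_2)$. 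Once this diagonality is made explicit, every other verification reduces to elementary direct-sum bookkeeping of the same flavor already displayed in the proof of the preceding theorem.
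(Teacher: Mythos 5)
Your proposal is correct and follows essentially the same route as the paper's proof: both verify the lattice and basis axioms one by one, with the key input being that $\tilde\Omega_\psi(m)$ and $\tilde x_m^-$ commute with the projections $p_j\colon M\to M_j$ (Theorem~\ref{commute}), which makes the operators block-diagonal with respect to $\mathcal L=\mathcal L_1\oplus\mathcal L_2$, together with the same weight-space and quotient-decomposition bookkeeping, including your intersection argument $(\pm\mathcal B\cup\{0\})\cap(\mathcal L_j/q\mathcal L_j)=\pm\mathcal B_j\cup\{0\}$ for axiom (4). The only cosmetic difference is that the paper dispatches the preliminary identifications $\mathcal L_j=\mathcal L\cap M_j$ and $\mathcal B_j=\mathcal B\cap(\mathcal L_j/q\mathcal L_j)$ by citing \cite[Theorem 4.2.10]{HK}, where you argue them directly.
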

\begin{proof}
It is straightforward to see that $\mathbb F(q^{1/2})\otimes _{\mathcal A_0}(\mathcal L_j)_\mu\cong (M_j)_\mu$ ($\mu\in \pi$),  $\mathcal L_j=\mathcal L\cap M_j$ and $\mathcal B_j=\mathcal B\cap (\mathcal L_j/q\mathcal L_j)$ for $j=1,2$ (see for instance \cite[Theorem 4.2.10]{HK}).

Let $u\in \mathcal L_\mu,$ for some $\mu \in \pi$. Then  $u=u_1+u_2 \in \mathcal L_1\oplus \mathcal L_2$ and 
$$
Ku_1+Ku_2=Ku=q^{\mu(h)}=q^{\mu (h)}u_1+q^{\mu (h)}u_2
$$
Then $Ku_1-q^{\mu(h)}u_1=-Ku_2+q^{\mu(h)}u_2$ which must be zero since $\mathcal L\cap \mathcal L_2=\{0\}$.  Thus $u_j\in \mathcal L_{j,\mu}, \, j = 1,2$. Hence $\mathcal L_\mu=\mathcal L_{1,\mu}\oplus \mathcal L_{2,\mu}$. 

 For $u\in \mathcal L_j \subset \mathcal L$, $u\in\oplus_{\mu\in \pi} \mathcal L_\mu$, so  as a consequence $u=\sum_{\mu\in \pi}u_\mu $ with $u_\mu \in \mathcal L_\mu$ and we can write $u_\mu =u_{1,\mu}+u_{2,\mu}$ with $u_{j,\mu }\in (\mathcal L_j)_\mu:= \mathcal L_j\cap M_\mu$.  Consequently $u-\sum_{\mu\in\pi}u_{j,\mu}=\sum_{\lambda \in \pi}u_{k,\mu}\in\mathcal L_j\cap \mathcal L_k$ with $k\neq j$.  Hence  $u-\sum_{\mu\in\pi}u_{j,\mu}=0$ and we have $u\in \oplus_{\mu\in \pi}\mathcal L_{j,\mu}$.

 Let $p_j: M\to M_j$ denote the natural projections which are $U_q(\mathfrak g)$-module homomorphisms . Consider $u_j\in\mathcal L_j$. Since $\Omega_\psi(m)\mathcal L\subset \mathcal L$ and $\bar x_m^-\mathcal L\subset \mathcal L$ we write $\Omega_\psi(m)u_1=\bar u_1+\bar u_2$ and $\bar x_m^-u_1=\check u_1+\check u_2$ with $\check u_j,\bar u_j\in \mathcal L_j$. By Theorem \ref{commute} we have 
 $$
 \Omega_\psi(m)(u_1)= \Omega_\psi(m) p_1(u_1)= p_1\Omega_\psi(m)(u_1)= \bar u_1
 $$
 and 
 $$
\bar x_m^-(u_1)=\bar x_m^- p_1(u_1)= p_1\bar x_m^-(u_1)= \check u_1
 $$
Hence $\Omega_\psi(m)(u_1)\in \mathcal L_1$ and $\bar x_m^-(u_1)\in \mathcal L_1$.  Similarly $\Omega_\psi(m)(\mathcal L_2)\subset \mathcal L_2$ and $\bar x_m^-(\mathcal L_2)\subset \mathcal L_2$. 
This concludes the proof that $\mathcal L_j$ are imaginary crystal lattices.

We can write 
$$
\mathcal L_1/q\mathcal L_1\oplus\mathcal L_2/q\mathcal L_2\cong \mathcal L/q\mathcal L\cong \mathbb F\otimes_{\mathcal A_0}\mathcal L\cong  \mathbb F\otimes_{\mathcal A_0}\mathcal L_1\oplus \mathbb F\otimes_{\mathcal A_0}\mathcal L_2 
$$
Using this isomorphism we have $\mathcal B_j=\mathcal B\cap (\mathcal L_j/q\mathcal L_j)$ is an $\mathbb F$-basis of $\mathcal L_j/q\mathcal L_j\cong \mathbb F\otimes_{\mathcal A_0}\mathcal L_j$.
Next we have 
$$
\mathcal B=\mathcal B_1\sqcup\mathcal B_2
$$
and thus $\mathcal B_j=\sqcup_{\mu\in \pi}(\mathcal B_j)_\mu$ where $(\mathcal B_j)_\mu=\mathcal B_j\cap ((\mathcal L_j)_\mu/q(\mathcal L_j)_\mu)$. 

Since the operators $\Omega_\psi(m)$ and $\bar x_m^-$ commute with $U_q(\mathfrak g)$-module homomorphisms in the category $\mathcal O_{\text{red},\text{im}}^q$ (Theorem \ref{commute}), we have that $\Omega_\psi(m)\mathcal B_j\subset \mathcal B_j\cup\{0\}$ and $\bar x_m^-\mathcal B_j\subset \mathcal B_j\cup\{0\}$ for all $m$ and $j=1,2$. 

For $m\in\mathbb Z$, if $ \tilde \Omega_\psi(-m)b\neq 0$ and $\tilde x_m^-b\neq0$ for $b\in \mathcal B_j$ (and hence $b\in \mathcal B$), then $\tilde x_m^-\tilde \Omega_\psi(-m)b=\tilde \Omega_\psi(-m)\tilde x_m^-b$.

This completes the proof that $(\mathcal L_j,\mathcal B_j)$ is an imaginary crystal basis for $j = 1,2$.
 \end{proof}

\section*{Acknowledgement}
The first author was partially support by a Simons Foundation Grant \#319261. The second author was supported in part by the CNPq grant \#301320/2013-6 and by the FAPESP grant \#2014/09310-5. The third author was partially support by the Simons Foundation Grant \#307555.

\bibliography{math}

\def\cprime{$'$}
\providecommand{\bysame}{\leavevmode\hbox to3em{\hrulefill}\thinspace}
\providecommand{\MR}{\relax\ifhmode\unskip\space\fi MR }
\providecommand{\MRhref}[2]{%
  \href{http://www.ams.org/mathscinet-getitem?mr=#1}{#2}
}
\providecommand{\href}[2]{#2}
\bibliographystyle{amsalpha}

\end{document}